\newtheorem{theorem}{Theorem}
\newtheorem{lemma}[theorem]{Lemma}
\newtheorem{proposition}[theorem]{Proposition}
\newtheorem{step}{Step}
\newtheorem{stepbis}{Step}
\numberwithin{equation}{section}
\newcommand{\RR}{\mathbb R}
\newcommand{\NN}{\mathbb N}
\renewcommand{\le}{\leqslant}
\renewcommand{\leq}{\leqslant}
\renewcommand{\geq}{\geqslant}
\begin{document}

\title[Nontrivial solutions of superlinear nonlocal problems]{Nontrivial solutions of superlinear\\ nonlocal problems}

\thanks{The authors were supported by the SRA Program P1-0292-0101 {\it Topology, Geometry and Nonlinear Analysis}. The first and the third author were supported by the INdAM-GNAMPA Project 2015 {\it Modelli ed equazioni non-locali di tipo frazionario}. The third author was supported by the MIUR National Research Project {\it Variational and Topological Methods in the Study of Nonlinear
Phenomena} and by the ERC grant $\epsilon$ ({\it Elliptic Pde's and
Symmetry of Interfaces and Layers for Odd Nonlinearities}).}

\author[G. Molica Bisci]{Giovanni Molica Bisci}
\address{Dipartimento PAU,
          Universit\`a `Mediterranea' di Reggio Calabria,
          Via Melissari 24, 89124 Reggio Calabria, Italy}
\email{\tt gmolica@unirc.it}

\author[D. Repov\v{s}]{Du\v{s}an Repov\v{s}}
\address{Faculty of Education, and Faculty of Mathematics and
         Physics,
         University of Ljubljana,
         POB 2964, 1001 Ljubljana, Slovenia}
\email{\tt dusan.repovs@guest.arnes.si}

\author[R. Servadei]{Raffaella Servadei}
\address{Dipartimento di Scienze di Base e Fondamenti,
          Universit\`a degli Studi di Urbino `Carlo Bo',
          Piazza della Repubblica 13, 61029 Urbino (Pesaro e Urbino), Italy}
\email{\tt raffaella.servadei@uniurb.it}

\keywords{Fractional Laplacian, nonlocal problems, variational methods, Fountain Theorem, integrodifferential operators, superlinear nonlinearities.\\
\phantom{aa} 2010 AMS Subject Classification: Primary: 49J35, 35A15, 35S15;
Secondary: 47G20, 45G05.}


\begin{abstract}
We study the question of the existence of infinitely many weak solutions for nonlocal equations of fractional Laplacian type with homogeneous Dirichlet boundary data, in presence of a superlinear term.
Starting from the well-known Ambrosetti-Rabinowitz condition, we  consider different growth assumptions on the nonlinearity, all of superlinear type. We obtain three different existence results in this setting by using the Fountain Theorem, which extend some classical results for semilinear Laplacian equations to the nonlocal fractional setting.
\end{abstract}

\maketitle

\tableofcontents

\section{Introduction and main results}\label{sec:introduzione}
Recently, nonlocal fractional problems have been appearing in the literature in many different contexts, both in the pure mathematical research and in concrete real-world applications.
Indeed, fractional and nonlocal operators appear in many diverse fields such as optimization, finance, phase transitions, stratified materials, anomalous diffusion, crystal dislocation, soft thin films, semipermeable membranes, flame propagation, conservation laws, ultra-relativistic limits of quantum mechanics, quasi-geostrophic flows, multiple scattering, minimal
surfaces, materials science and water waves.

In this paper we are interested in the existence of infinitely many solutions of the following problem
\begin{equation}\label{problemaKlambda}
 \left\{
\begin{array}{ll}
-\mathcal L_K u-\lambda u=f(x, u) & \mbox{in }\,\, \Omega\\
u=0  & \mbox{in }\,\, \RR^n\setminus \Omega\,.
\end{array}\right.
\end{equation}
Here $\Omega$ is an open bounded subset of $\RR^n$ with continuous boundary $\partial\Omega$, $n>2s$, $s\in(0,1)$, the term $f$ satisfies different superlinear conditions, and $\mathcal L_K$ is the integrodifferential operator defined as
follows
\begin{equation}\label{lk}
\mathcal L_Ku(x):=
\int_{\RR^n}\Big(u(x+y)+u(x-y)-2u(x)\Big)K(y)\,dy\,,
\,\,\,\,\, x\in \RR^n\,,
\end{equation}
where the kernel $K:\RR^n\setminus\{0\}\to(0,+\infty)$ is such that
\begin{equation}\label{kernel}
m K\in L^1(\RR^n),\,\, \mbox{where}\,\,  m(x)=\min \{|x|^2,
1\}\,;
\end{equation}
\begin{equation}\label{kernelfrac}
\mbox{there exists}\,\, \theta>0\,\,
\mbox{such that}\,\, K(x)\geq \theta |x|^{-(n+2s)}\,\,
\mbox{for any}\,\, x\in \RR^n \setminus\{0\}\,.\\
\end{equation}
A model for $K$ is given by the singular kernel $K(x)=|x|^{-(n+2s)}$ which gives rise to the fractional Laplace operator $-(-\Delta)^s$, defined as
$$-(-\Delta)^s u(x):=
\int_{\RR^n}\frac{u(x+y)+u(x-y)-2u(x)}{|y|^{n+2s}}\,dy\,,
\,\,\,\,\, x\in \RR^n\,.$$

Under superlinear and subcritical conditions on $f$, the authors proved in \cite{svmountain, svlinking} the existence of a nontrivial solution of \eqref{problemaKlambda} for any $\lambda \in \RR$, as an application of the Mountain Pass Theorem and the Linking Theorem (see \cite{ar, rabinowitz}). Motivated by these existence results, in this paper we shall study the existence of infinitely many solutions of \eqref{problemaKlambda}, using the Fountain Theorem due to Bartsch (see \cite{bartsch}).

\subsection{Variational formulation of the problem}\label{subsec:weakproblem}
In order to study problem~\eqref{problemaKlambda}, we shall consider its weak formulation, given by \begin{equation}\label{weakproblemlambda}
\left\{\begin{array}{l}
\displaystyle\int_{\RR^n\times\RR^n} (u(x)-u(y))(\varphi(x)-\varphi(y))K(x-y) dx\,dy-\lambda \int_\Omega u(x)\varphi(x)\,dx\\
\qquad \qquad \qquad \qquad \qquad \qquad \qquad \qquad \qquad \displaystyle=\int_\Omega f(x,u(x))\varphi(x)dx \\
\qquad \qquad \qquad \qquad \qquad \qquad \qquad \qquad \qquad \qquad \qquad \qquad \qquad \qquad\forall\,\varphi \in X_0\\
u\in X_0,
\end{array}
\right.
\end{equation}
which represents the Euler-Lagrange equation of the energy functional~$\mathcal J_{K,\,\lambda}:X_0\to \RR$ defined as
\begin{equation}\label{JK}
\begin{aligned}
\mathcal J_{K,\,\lambda}(u)& :=\frac 1 2 \int_{\RR^n\times \RR^n}|u(x)-u(y)|^2 K(x-y)\,dx\,dy
-\frac \lambda 2 \int_\Omega |u(x)|^2\,dx\\
& \qquad \qquad \qquad \qquad \qquad \qquad \qquad \qquad -\int_\Omega F(x,u(x))\,dx\,,
\end{aligned}
\end{equation}
where the function $F$ is
the primitive of $f$ with respect to the second variable, that is
\begin{equation}\label{F}
{\displaystyle F(x,t)=\int_0^t f(x,\tau)d\tau}\,.
\end{equation}

Here, the space $X_0$ is defined as
$$X_0:=\big\{g\in X : g=0\,\, \mbox{a.e. in}\,\,
\RR^n\setminus \Omega\big\}\,,$$
where the functional space $X$ denotes the linear space of Lebesgue
measurable functions from $\RR^n$ to $\RR$ such that the restriction
of any function $g$ in $X$ to $\Omega$ belongs to $L^2(\Omega)$ and
$$\mbox{the map}\,\,\,
(x,y)\mapsto (g(x)-g(y))\sqrt{K(x-y)}\,\,\, \mbox{is in}\,\,\,
L^2\big((\RR^n\times\RR^n) \setminus ({\mathcal C}\Omega\times
{\mathcal C}\Omega), dxdy\big)\,,$$ with ${\mathcal C}\Omega:=\RR^n
\setminus\Omega$.

\subsection{The main results of the paper}\label{subsec:maintheorems}
Throughout this paper we shall assume different superlinear conditions on the term $f$.
First of all, we suppose that $f:\overline \Omega\times \RR\to \RR$ is a function satisfying the following standard conditions
\begin{equation}\label{fcontinua}
f\in C(\overline \Omega \times \RR)
\end{equation}
\begin{equation}\label{crescita}
\begin{aligned}
&\mbox{there exist}\,\, a_1, a_2>0\,\,\mbox{and}\,\, q\in (2, 2^*), 2^*=2n/(n-2s)\,,\,\, \mbox{such that}\\
&\qquad \qquad |f(x,t)|\le a_1+a_2|t|^{q-1}\,\, \mbox{for any}\,\, x\in \Omega, t\in \RR\,;
\end{aligned}
\end{equation}
and
\begin{equation}\label{mu0}
\begin{aligned}
&\mbox{there exist}\,\, \mu>2\,\, \mbox{and}\,\,r>0\,\, \mbox{such that}\,\, \mbox{for any}\,\, x\in \Omega, t\in \RR,\, |t|\geq r\\
& \qquad \qquad \qquad \qquad 0<\mu F(x,t)\le tf(x,t)\,,
\end{aligned}
\end{equation}
where $F$ is the function from \eqref{F}.

When looking for infinitely many solutions, it is natural to require some symmetry of the nonlinearity. Here, we assume the following condition:
\begin{equation}\label{fsimmetrica}
\begin{aligned}
f(x,-t)=-f(x,t)\,\,\,\mbox{for any}\,\, x\in \Omega,\, t\in \RR\,.
\end{aligned}
\end{equation}

As a model for $f$ we can take the function $f(x,t)=a(x)|t|^{q-2}t$, with $a\in C(\overline\Omega)$ and $q\in (2, 2^*)$\,.

The first result of this paper is in the following theorem:
\begin{theorem}\label{thmain1lambda}
Let $s\in (0,1)$, $n>2s$, and let $\Omega$ be an open bounded subset of
$\RR^n$ with continuous boundary. Let
$K:\RR^n\setminus\{0\}\to(0,+\infty)$ be a function
satisfying \eqref{kernel} and \eqref{kernelfrac} and let $f:\overline \Omega \times \RR \to \RR$ be a
function satisfying \eqref{fcontinua}--\eqref{fsimmetrica}.

Then for any $\lambda \in \RR$ the problem~\eqref{problemaKlambda} has infinitely many solutions $u_j\in X_0$, $j\in \NN$, whose energy $\mathcal J_{K,\,\lambda}(u_j)\to +\infty$ as $j\to +\infty$.
\end{theorem}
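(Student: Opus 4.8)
The plan is to apply Bartsch's Fountain Theorem to the energy functional $\mathcal J_{K,\,\lambda}$ on the Hilbert space $X_0$, exploiting the $\ZZ_2$-symmetry coming from \eqref{fsimmetrica}. First I would record the basic functional-analytic setup: the space $X_0$ is a separable Hilbert space (with the inner product induced by the Gagliardo-type seminorm associated with $K$) that embeds continuously into $L^q(\Omega)$ for every $q\in[1,2^*]$ and compactly for $q\in[1,2^*)$, by the embedding results underlying \cite{svmountain, svlinking}. Since $X_0$ is separable, I would fix a Schauder basis $\{e_j\}_{j\in\NN}$ and set $Y_k:=\bigoplus_{j=1}^{k}\RR e_j$ and $Z_k:=\overline{\bigoplus_{j\ge k}\RR e_j}$; by \eqref{fcontinua} and \eqref{crescita} the functional $\mathcal J_{K,\,\lambda}$ is of class $C^1$ on $X_0$, and by \eqref{fsimmetrica} together with $F(x,0)=0$ it is even. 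Thus the three structural hypotheses of the Fountain Theorem I must verify are: (i) a Palais--Smale (or Cerami) condition; (ii) $a_k:=\max_{u\in Y_k,\,\|u\|=\rho_k}\mathcal J_{K,\,\lambda}(u)\le 0$ for a suitable $\rho_k$; and (iii) $b_k:=\inf_{u\in Z_k,\,\|u\|=r_k}\mathcal J_{K,\,\lambda}(u)\to+\infty$ for suitable $r_k$, with $r_k<\rho_k$.

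For the Palais--Smale condition I would argue as in the mountain-pass treatment of the same problem: given a PS sequence $\{u_m\}$, the Ambrosetti--Rabinowitz condition \eqref{mu0} combined with the growth bound \eqref{crescita} yields, via the standard computation $\mathcal J_{K,\,\lambda}(u_m)-\tfrac1\mu\langle \mathcal J'_{K,\,\lambda}(u_m),u_m\rangle\ge (\tfrac12-\tfrac1\mu)\|u_m\|^2 - C\|u_m\| - C$ (absorbing the $\lambda$-term using the compact embedding into $L^2(\Omega)$ and, if $\lambda$ is large, a spectral decomposition argument to handle the indefinite quadratic part), the boundedness of $\{u_m\}$ in $X_0$; then the compact embedding $X_0\hookrightarrow L^q(\Omega)$ upgrades a weakly convergent subsequence to a strongly convergent one in the usual way. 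For (ii), on the finite-dimensional space $Y_k$ all norms are equivalent, so \eqref{mu0} forces $F(x,t)\ge c|t|^\mu - c'$ and hence $\mathcal J_{K,\,\lambda}(u)\le \tfrac12\|u\|^2 + \tfrac{|\lambda|}2\|u\|_{L^2(\Omega)}^2 - c\|u\|_{L^\mu(\Omega)}^\mu + c'|\Omega| \to -\infty$ as $\|u\|\to\infty$ on $Y_k$, giving a suitable $\rho_k$.

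The step I expect to be the main obstacle is (iii), the growth estimate on $Z_k$. The key quantity is $\beta_k:=\sup_{u\in Z_k,\,\|u\|=1}\|u\|_{L^q(\Omega)}$; the standard lemma (going back to Willem's book) is that $\beta_k\to0$ as $k\to\infty$ precisely because $X_0\hookrightarrow L^q(\Omega)$ is \emph{compact} and the $e_j$ form a basis. Using \eqref{crescita} one gets $\int_\Omega F(x,u)\,dx \le C(\|u\|_{L^1}+\|u\|_{L^q}^q)\le C(\|u\|+\beta_k^q\|u\|^q)$ on $Z_k$; to also control the $\lambda$-term I would set $\gamma_k:=\sup_{u\in Z_k,\,\|u\|=1}\|u\|_{L^2(\Omega)}\to0$ likewise, so that for $u\in Z_k$ with $\|u\|=r_k$,
\[
\mathcal J_{K,\,\lambda}(u)\ge \frac12 r_k^2 - \frac{|\lambda|}2\gamma_k^2 r_k^2 - C r_k - C\beta_k^q r_k^q .
\]
Choosing $r_k:=(C q\,\beta_k^q)^{-1/(q-2)}$ (which tends to $+\infty$ since $\beta_k\to0$ and $q>2$) makes the leading terms behave like $(\tfrac12-o(1))r_k^2 - Cr_k - \tfrac1q r_k^2\cdot\tfrac1{?}$, and a short computation shows $b_k\ge c\,r_k^2-Cr_k\to+\infty$; since $r_k\to\infty$ while for each fixed $k$ one can still arrange $\rho_k>r_k$ by taking $\rho_k$ large in (ii), the geometric requirement $r_k<\rho_k$ is met. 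Once (i)--(iii) hold, the Fountain Theorem produces an unbounded sequence of critical values, hence infinitely many solutions $u_j\in X_0$ with $\mathcal J_{K,\,\lambda}(u_j)\to+\infty$, which are weak solutions of \eqref{problemaKlambda} by the variational characterization \eqref{weakproblemlambda}.
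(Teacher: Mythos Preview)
Your approach is essentially the paper's: apply Bartsch's Fountain Theorem, verify the Palais--Smale condition via the Ambrosetti--Rabinowitz combination, and check the two geometric conditions using the superlinear growth of $F$ on $Y_k$ and the decay $\beta_k\to 0$ on $Z_k$.

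The one place where your sketch has a genuine gap is the boundedness of Palais--Smale sequences when $\lambda\ge\lambda_1$. The combination $\mathcal J_{K,\lambda}(u_m)-\tfrac1\mu\langle\mathcal J'_{K,\lambda}(u_m),u_m\rangle$ produces the quadratic part $\bigl(\tfrac12-\tfrac1\mu\bigr)\bigl(\|u_m\|_{X_0}^2-\lambda\|u_m\|_{L^2}^2\bigr)$, and for $\lambda\ge\lambda_1$ this form is \emph{indefinite} on $X_0$; neither the compact embedding into $L^2$ nor an unspecified ``spectral decomposition'' turns it into a coercive lower bound, because the remainder $\int\bigl(\tfrac1\mu f u_m-F\bigr)$ coming from \eqref{mu0} is merely nonnegative on $\{|u_m|\ge r\}$ and carries no norm control. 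The paper fixes this by replacing $\tfrac1\mu$ with $\tfrac1\sigma$ for some $\sigma\in(2,\mu)$: then the remainder becomes $\bigl(\tfrac\mu\sigma-1\bigr)\int_{\{|u_m|\ge r\}}F(x,u_m)\,dx\ge c\,\|u_m\|_{L^\mu}^\mu-C$ by \eqref{Fmu}, and Young's inequality $\|u_m\|_{L^2}^2\le \varepsilon\|u_m\|_{L^\mu}^\mu+C_\varepsilon$ absorbs the bad $\lambda$-term into this positive $L^\mu$ contribution, leaving a clean bound $\bigl(\tfrac12-\tfrac1\sigma\bigr)\|u_m\|_{X_0}^2-C$. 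You should either adopt this trick or make your spectral argument explicit.

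A minor difference in (iii): the paper works with the eigenfunction basis $\{e_k\}$ of $-\mathcal L_K$, so that on $Z_k$ one has directly $\|u\|_{L^2}^2\le\lambda_k^{-1}\|u\|_{X_0}^2$ and the $\lambda$-term is handled by the constant $C_{k,\lambda}=\tfrac12(1-\lambda/\lambda_k)$ without a separate sequence $\gamma_k$; your route via $\gamma_k\to 0$ is an equally valid alternative. Finally, your display ``$\tfrac1q r_k^2\cdot\tfrac1{?}$'' is left unfinished, but the intended choice $r_k=(Cq\beta_k^q)^{-1/(q-2)}$ does yield $b_k\to+\infty$ as claimed.
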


Assumption~\eqref{mu0} is the well-known {\em Ambrosetti-Rabinowitz condition}, originally introduced in \cite{ar}. This condition is often considered when dealing with superlinear elliptic boundary value problems (see, for instance, \cite{struwe, willem} and the references therein). Its importance is due to the fact that it assures the boundedness of the Palais--Smale sequences for the energy functional associated with the problem under consideration.

The Ambrosetti-Rabinowitz condition is a superlinear growth assumption on the nonlinearity~$f$.
Indeed, integrating \eqref{mu0} we get that
\begin{equation}\label{Fmu}
\begin{aligned}
&\qquad \mbox{there exist}\,\, a_3, a_4>0\,,\,\, \mbox{such that}\\
& F(x,t)\geq a_3 |t|^{\mu}-a_4\,\,\, \mbox{for any}\,\, (x,t)\in \overline\Omega\times  \RR\,,
\end{aligned}
\end{equation}
see, for instance, \cite[Lemma~4]{svlinking} for a detailed proof.

As a consequence of~\eqref{Fmu} and the fact that $\mu>2$, we have that
\begin{equation}\label{Flimite}
\displaystyle \lim_{|t|\to +\infty}\frac{F(x,t)}{|t|^2}=+\infty\,\,\, \mbox{uniformly for any}\,\, x\in \overline\Omega\,,
\end{equation}
which is another superlinear assumption on $f$ at infinity.

A simple computation proves that the function
\begin{equation}\label{fAR}
f(x,t)=t\log(1+|t|)
\end{equation}
satisfies condition~\eqref{Flimite}, but not~\eqref{Fmu} (and so, as a consequence, does not satisfy \eqref{mu0}).

Recently, many superlinear problems without the Ambrosetti-Rabinowitz condition have been considered in the literature (see, for instance, \cite{CostaM, fmz, jeanjean, miyagaki, schechter, WZ, Z} and references therein). In particular, in \cite{alvesliu, fangliu, liu, liu2, liuliu} the local analogue of problem~\eqref{problemaKlambda} (that is problem~\eqref{problemaKlambda} with $\mathcal L_K$ replaced by $-(-\Delta)$) has been studied. In this framework Jeanjean introduced in \cite{jeanjean} the following assumption on $f$:
\begin{equation}\label{mathcalF}
\begin{aligned}
& \qquad \mbox{there exists}\,\, \gamma\geq 1\,\, \mbox{such that for any}\,\,\, x\in \Omega\\
& \mathcal F(x,t') \leq \gamma \mathcal F(x,t)\,\, \mbox{for any}\,\, t, t'\in \RR\,\, \mbox{with}\,\, 0<t'\leq t\,,
\end{aligned}
\end{equation}
where
\begin{equation}\label{mathcalFdef}
\mathcal F(x,t)=\frac 1 2\, tf(x,t)-F(x,t)\,.
\end{equation}
Note that \eqref{mathcalF} is a global condition and that the function~\eqref{fAR} satisfies \eqref{mathcalF}.

In this setting our existence result becomes:
\begin{theorem}\label{thmain2lambda}
Let $s\in (0,1)$, $n>2s$, and let $\Omega$ be an open bounded subset of
$\RR^n$ with continuous boundary. Let
$K:\RR^n\setminus\{0\}\to(0,+\infty)$ be a function
satisfying \eqref{kernel} and \eqref{kernelfrac} and let $f:\overline \Omega \times \RR \to \RR$ be a
function satisfying \eqref{fcontinua}, \eqref{crescita}, \eqref{fsimmetrica}, \eqref{Flimite} and \eqref{mathcalF}.

Then for any $\lambda \in \RR$ the problem~\eqref{problemaKlambda} has infinitely many solutions $u_j\in X_0$, $j\in \NN$, whose energy $\mathcal J_{K,\,\lambda}(u_j)\to +\infty$ as $j\to +\infty$.
\end{theorem}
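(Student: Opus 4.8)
The plan is to apply Bartsch's Fountain Theorem to the energy functional $\mathcal J_{K,\lambda}$ on $X_0$, exactly as in the proof of Theorem~\ref{thmain1lambda}, so that the only point requiring a genuinely new argument is the verification of the Palais--Smale condition, since here the Ambrosetti--Rabinowitz condition \eqref{mu0} (which is what forced PS boundedness before) has been replaced by the weaker pair \eqref{Flimite} and \eqref{mathcalF}. First I would recall the functional-analytic setup: $X_0$ is a separable Hilbert space with a Schauder basis (or equivalently a sequence of finite-dimensional subspaces $Y_k = \mathrm{span}\{e_1,\dots,e_k\}$ and their complements $Z_k = \overline{\mathrm{span}\{e_{k+1},\dots\}}$), $\mathcal J_{K,\lambda}\in C^1(X_0,\RR)$, and by \eqref{fsimmetrica} the functional is even. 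The geometric hypotheses of the Fountain Theorem — that $\max_{u\in Y_k,\ \|u\|=\rho_k}\mathcal J_{K,\lambda}(u)\le 0$ for suitable $\rho_k$, and that $\inf_{u\in Z_k,\ \|u\|=r_k}\mathcal J_{K,\lambda}(u)\to+\infty$ for suitable $0<r_k<\rho_k$ — are established verbatim as in Theorem~\ref{thmain1lambda}: the first uses \eqref{Flimite} (through \eqref{Fmu}-type coercivity of $F$ from below on finite-dimensional spaces, where all norms are equivalent), and the second uses the subcritical growth \eqref{crescita} together with the fact that $\beta_k:=\sup_{u\in Z_k,\ \|u\|=1}\|u\|_{L^q(\Omega)}\to 0$ as $k\to\infty$ (a standard compactness lemma for the embedding $X_0\hookrightarrow L^q(\Omega)$, $q<2^*$). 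Neither of these steps uses \eqref{mu0}, so they transfer without change.

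The heart of the proof is therefore the Palais--Smale condition for $\mathcal J_{K,\lambda}$ under \eqref{mathcalF} instead of \eqref{mu0}. I would take a sequence $\{u_j\}\subset X_0$ with $\mathcal J_{K,\lambda}(u_j)\to c$ and $\mathcal J'_{K,\lambda}(u_j)\to 0$ in $X_0'$, and first prove $\{u_j\}$ is bounded. The mechanism, following Jeanjean, is: if $\|u_j\|\to\infty$, normalize $v_j := u_j/\|u_j\|$, so $v_j\rightharpoonup v$ in $X_0$ and $v_j\to v$ in $L^q(\Omega)$ and a.e. Dividing $2\mathcal J_{K,\lambda}(u_j)$ by $\|u_j\|^2$ shows $\|v_j\|^2 - \lambda\|v_j\|_{L^2(\Omega)}^2 - 2\int_\Omega F(x,u_j)/\|u_j\|^2\,dx \to 0$; on the set where $v\ne0$ one has $|u_j|\to\infty$ and $F(x,u_j)/u_j^2\to+\infty$ by \eqref{Flimite}, while $F(x,u_j)/\|u_j\|^2 = (F(x,u_j)/u_j^2)\,v_j^2$, and by Fatou this forces $\int_\Omega F(x,u_j)/\|u_j\|^2\,dx\to+\infty$ unless $v\equiv0$ a.e. — a contradiction with the bound on the left side. (Here one uses that $F$ is bounded below, e.g. $F(x,t)\ge -C(1+|t|^2)$, which follows from \eqref{crescita}, to apply Fatou after adding a constant.) So $v\equiv 0$. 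Then one uses the second, and more delicate, half of Jeanjean's trick: choose $t_j\in[0,1]$ with $\mathcal J_{K,\lambda}(t_j u_j) = \max_{t\in[0,1]}\mathcal J_{K,\lambda}(t u_j)$; since $v\equiv0$, for any fixed $R>0$ the rescaled functions $R v_j$ satisfy $\int_\Omega F(x, R v_j)\,dx\to0$ (by continuity of the Nemytskii operator and $v_j\to0$ in $L^q$), hence $\mathcal J_{K,\lambda}(R v_j)\to R^2/2$ (the $\lambda$ term also vanishes), so for $j$ large $\mathcal J_{K,\lambda}(t_j u_j)\ge \mathcal J_{K,\lambda}(R v_j)\ge R^2/4$; as $R$ is arbitrary, $\mathcal J_{K,\lambda}(t_j u_j)\to+\infty$. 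On the other hand $t_j\in(0,1)$ for $j$ large (it cannot be $0$ since the max exceeds $0$, and if $t_j=1$ use that $\mathcal J_{K,\lambda}(u_j)\to c$ is bounded), so $\frac{d}{dt}\big|_{t=t_j}\mathcal J_{K,\lambda}(tu_j)=0$, i.e. $\langle\mathcal J'_{K,\lambda}(t_ju_j), t_j u_j\rangle = 0$. Combining this with \eqref{mathcalF} — precisely, $\mathcal J_{K,\lambda}(t_j u_j) = \mathcal J_{K,\lambda}(t_j u_j) - \tfrac12\langle\mathcal J'_{K,\lambda}(t_ju_j),t_ju_j\rangle = \int_\Omega \mathcal F(x, t_j u_j)\,dx \le \gamma\int_\Omega \mathcal F(x, u_j)\,dx = \gamma\big(\mathcal J_{K,\lambda}(u_j) - \tfrac12\langle\mathcal J'_{K,\lambda}(u_j),u_j\rangle\big)$, which is bounded since $\mathcal J_{K,\lambda}(u_j)\to c$ and $\langle\mathcal J'_{K,\lambda}(u_j),u_j\rangle\to0$ (as $\|\mathcal J'_{K,\lambda}(u_j)\|\|u_j\|$ with $\mathcal J'_{K,\lambda}(u_j)\to0$... here one needs $\|u_j\|$ controlled, but in fact $\langle\mathcal J'_{K,\lambda}(u_j),u_j\rangle$ is $o(1)\cdot\|u_j\|$; one absorbs this, or more cleanly one notes $o(\|u_j\|)$ divided by nothing still contradicts $\mathcal J_{K,\lambda}(t_ju_j)\to+\infty$ after dividing through appropriately). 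The inequality $\mathcal F(x,t_j u_j)\le\gamma\mathcal F(x,u_j)$ requires $0<t_j\le1$ and, via \eqref{fsimmetrica}, extends from $t>0$ to all $t$ since $\mathcal F(x,\cdot)$ is even. This contradicts $\mathcal J_{K,\lambda}(t_j u_j)\to+\infty$, so $\{u_j\}$ is bounded after all.

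Once boundedness is in hand, the rest of the PS verification is routine and identical to the Ambrosetti--Rabinowitz case: pass to a subsequence with $u_j\rightharpoonup u$ in $X_0$, hence $u_j\to u$ strongly in $L^q(\Omega)$ by the compact embedding; write $\langle\mathcal J'_{K,\lambda}(u_j)-\mathcal J'_{K,\lambda}(u_k), u_j-u_k\rangle$, use that the bilinear form $(u,\varphi)\mapsto\iint(u(x)-u(y))(\varphi(x)-\varphi(y))K(x-y)$ is the inner product on $X_0$, and that the remaining terms $-\lambda\int_\Omega(u_j-u_k)^2$ and $\int_\Omega(f(x,u_j)-f(x,u_k))(u_j-u_k)$ tend to zero by the $L^q$ and $L^2$ convergence together with \eqref{crescita} (Hölder); this yields $\|u_j-u_k\|\to0$, so $\{u_j\}$ is Cauchy in $X_0$ and converges strongly. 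The Fountain Theorem then produces an unbounded sequence of critical values $c_j\to+\infty$, hence infinitely many solutions $u_j$ with $\mathcal J_{K,\lambda}(u_j)\to+\infty$.

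I expect the main obstacle to be the boundedness step, and within it the careful handling of the factor $t_j$ and the bookkeeping of the $o(\|u_j\|)$ error terms: one must argue that $t_j$ stays away from $1$ (using $\mathcal J_{K,\lambda}(u_j)$ bounded while $\mathcal J_{K,\lambda}(t_ju_j)\to+\infty$ would itself be the contradiction if $t_j\equiv1$, so that subcase is actually immediate) and that the Nemytskii-operator continuity gives $\int_\Omega F(x,Rv_j)\,dx\to\int_\Omega F(x,0)\,dx=0$ uniformly enough in the fixed parameter $R$ to let $R\to\infty$ afterwards. The growth condition \eqref{crescita} with $q<2^*$ is exactly what makes these Nemytskii continuity and compactness arguments work, and \eqref{Flimite} is what kills the weak limit $v$; \eqref{mathcalF} enters only at the very last inequality. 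Everything else is a transcription of the proof of Theorem~\ref{thmain1lambda}.
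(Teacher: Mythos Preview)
Your overall architecture matches the paper's: apply the Fountain Theorem, carry over geometric condition~$(ii)$ unchanged, redo condition~$(i)$ via \eqref{Flimite}, and replace the Ambrosetti--Rabinowitz boundedness argument by Jeanjean's $t_j$-trick using \eqref{mathcalF}. Two points need correction.

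The substantive gap is precisely the one you flag but do not close. In the final chain
\[
\mathcal J_{K,\lambda}(t_j u_j)=\int_\Omega \mathcal F(x,t_ju_j)\,dx
\le \gamma\int_\Omega \mathcal F(x,u_j)\,dx
= \gamma\Big(\mathcal J_{K,\lambda}(u_j)-\tfrac12\langle\mathcal J'_{K,\lambda}(u_j),u_j\rangle\Big),
\]
for a Palais--Smale sequence the right-hand side is only $\gamma c + o(\|u_j\|_{X_0})$, and there is no way to ``divide through appropriately'': all you have established about the left-hand side is that it eventually exceeds every fixed constant, not that it grows at any definite rate in $\|u_j\|_{X_0}$, so $o(\|u_j\|_{X_0})$ on the right gives no contradiction. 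The paper (Proposition~\ref{propositionps2}) avoids this by verifying the \emph{Cerami} condition instead: one starts from a sequence with $(1+\|u_j\|_{X_0})\,\|\mathcal J'_{K,\lambda}(u_j)\|\to 0$, which yields $\langle\mathcal J'_{K,\lambda}(u_j),u_j\rangle\to 0$ outright (see \eqref{J'00CeramiPSadd}) and closes the inequality cleanly. The Fountain Theorem is available under the Cerami condition, so nothing is lost; your Step~2 (strong convergence of bounded sequences) and your handling of the cases $v_\infty\equiv 0$ and $v_\infty\not\equiv 0$ are otherwise exactly as in the paper.

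A smaller point: condition~$(i)$ does \emph{not} transfer verbatim from Theorem~\ref{thmain1lambda}. Step~\ref{step1geometry} there uses the bound $F(x,t)\ge a_3|t|^\mu-a_4$ with $\mu>2$, which is \eqref{Fmu} and hence a consequence of \eqref{mu0}, no longer assumed. The replacement argument (carried out in Subsection~\ref{subsec:proofthmain2}) is the one your parenthetical gestures at: from \eqref{Flimite} one gets $F(x,t)\ge \varepsilon|t|^2-B_\varepsilon$ for every $\varepsilon>0$, hence on the finite-dimensional $Y_k$ (where $\|\cdot\|_{L^2(\Omega)}$ and $\|\cdot\|_{X_0}$ are equivalent)
\[
\mathcal J_{K,\lambda}(u)\le (C_{k,\lambda}-\varepsilon C_k)\,\|u\|_{X_0}^2+B_\varepsilon|\Omega|,
\]
and choosing $\varepsilon$ large enough makes the quadratic coefficient negative, so $\mathcal J_{K,\lambda}(u)\le 0$ for $\|u\|_{X_0}=r_k$ large.
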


Another interesting condition used in the classical Laplace framework is the following one introduced by Liu in \cite{liu2}:
\begin{equation}\label{fmonotonar}
\begin{aligned}
& \qquad \qquad \qquad \mbox{there exists}\,\, \bar t>0\,\, \mbox{such that for any}\,\,\, x\in \Omega\\
&\mbox{the function}\,\,\, t\mapsto \frac{f(x,t)}{t}\,\,\, \mbox{is increasing if}\,\, t\geq \bar t\,\,\,
\mbox{and decreasing if}\,\, t\leq -\bar t.
\end{aligned}
\end{equation}

Under this assumption, our main result reads as follows:
\begin{theorem}\label{thmain3lambda}
Let $s\in (0,1)$, $n>2s$, and let $\Omega$ be an open bounded subset of
$\RR^n$ with continuous boundary. Let
$K:\RR^n\setminus\{0\}\to(0,+\infty)$ be a function
satisfying \eqref{kernel} and \eqref{kernelfrac} and let $f:\overline \Omega \times \RR \to \RR$ be a
function satisfying \eqref{fcontinua}, \eqref{crescita}, \eqref{fsimmetrica}, \eqref{Flimite} and \eqref{fmonotonar}.

Then for any $\lambda \in \RR$ the problem~\eqref{problemaKlambda} has infinitely many solutions $u_j\in X_0$, $j\in \NN$, whose energy $\mathcal J_{K,\,\lambda}(u_j)\to +\infty$ as $j\to +\infty$.
\end{theorem}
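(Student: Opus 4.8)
The plan is to apply Bartsch's Fountain Theorem to the even functional $\mathcal J_{K,\lambda}$ on $X_0$, exactly as in the proofs of Theorems~\ref{thmain1lambda} and~\ref{thmain2lambda}, the only difference being the verification of the Palais--Smale condition. Recall that $X_0$ is a separable Hilbert space, so it admits a sequence of finite-dimensional subspaces $Z_j$ and $Y_j$ with $X_0=\overline{\bigoplus_{j\in\NN}\RR e_j}$, $Y_k=\bigoplus_{j=1}^k\RR e_j$, $Z_k=\overline{\bigoplus_{j\ge k}\RR e_j}$. Condition~\eqref{fsimmetrica} guarantees that $\mathcal J_{K,\lambda}$ is even, and \eqref{fcontinua}--\eqref{crescita} give that $\mathcal J_{K,\lambda}\in C^1(X_0)$. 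Thus the scheme has three ingredients: (i) the geometric estimates of the Fountain Theorem, namely that for suitable radii $\rho_k>r_k>0$ one has $\inf_{u\in Z_k,\,\|u\|=r_k}\mathcal J_{K,\lambda}(u)\to+\infty$ and $\max_{u\in Y_k,\,\|u\|=\rho_k}\mathcal J_{K,\lambda}(u)\le 0$; and (ii) the Palais--Smale condition at every level $c\in\RR$.

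For ingredient (i) I would argue as follows. The superlinear growth~\eqref{Flimite}, together with the subcritical bound~\eqref{crescita}, yields an inequality of the form $F(x,t)\ge C|t|^2 - C'$ for $C$ as large as we please, plus $F(x,t)\le a_1|t|+\tfrac{a_2}{q}|t|^q$; using these and the compact embeddings $X_0\hookrightarrow L^\nu(\Omega)$ for $\nu\in[1,2^*)$, one bounds $\mathcal J_{K,\lambda}$ from below on the sphere of radius $r_k$ in $Z_k$ by a quantity tending to $+\infty$, because $\beta_k:=\sup_{u\in Z_k,\,\|u\|=1}\|u\|_{L^q(\Omega)}\to0$ as $k\to\infty$ (this is the standard lemma: it follows from separability and weak convergence). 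For the upper estimate one uses that on the \emph{finite-dimensional} space $Y_k$ all norms are equivalent, so $F(x,t)\ge a_3|t|^\mu-a_4$ would be replaced here by the consequence of~\eqref{Flimite} that $\int_\Omega F(x,u)\,dx$ dominates $\|u\|^2$ on $Y_k$ along any ray, forcing $\mathcal J_{K,\lambda}(u)\to-\infty$ as $\|u\|\to\infty$ in $Y_k$; hence a large $\rho_k$ works. These parts are identical to Theorems~\ref{thmain1lambda}--\ref{thmain2lambda} and do not use~\eqref{fmonotonar}.

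The crux, and the only place where hypothesis~\eqref{fmonotonar} enters, is the Palais--Smale condition, which I expect to be the main obstacle. Let $(u_k)\subset X_0$ satisfy $\mathcal J_{K,\lambda}(u_k)\to c$ and $\mathcal J'_{K,\lambda}(u_k)\to0$; we must show $(u_k)$ has a convergent subsequence. As usual it suffices to prove $(u_k)$ is bounded in $X_0$, since then up to a subsequence $u_k\rightharpoonup u$ weakly, the compact embedding into $L^q(\Omega)$ gives strong convergence there, and plugging $\varphi=u_k-u$ into $\mathcal J'_{K,\lambda}(u_k)(\varphi)\to0$ together with $\mathcal J'_{K,\lambda}(u)(u_k-u)\to0$ yields $\|u_k-u\|\to0$ by the structure of the quadratic form (this last step is routine and already carried out in~\cite{svmountain}). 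To get boundedness, suppose by contradiction $\|u_k\|\to\infty$ and set $w_k=u_k/\|u_k\|$, so $\|w_k\|=1$ and, up to a subsequence, $w_k\rightharpoonup w$ in $X_0$, $w_k\to w$ in $L^\nu(\Omega)$ and a.e. in $\Omega$. Dividing $\mathcal J_{K,\lambda}(u_k)$ by $\|u_k\|^2$, using $\mathcal J_{K,\lambda}(u_k)/\|u_k\|^2\to0$ and $\mathcal J'_{K,\lambda}(u_k)(u_k)/\|u_k\|^2\to0$, and~\eqref{Flimite}, one shows $w\not\equiv0$: indeed if $w\equiv0$ then $\int_\Omega F(x,u_k)/\|u_k\|^2\to0$, contradicting $\tfrac12 - \tfrac{\lambda}{2}\|w_k\|_{L^2(\Omega)}^2 - \int_\Omega F(x,u_k)/\|u_k\|^2\to0$ once one checks (using Fatou and~\eqref{Flimite}) that the integral term must in fact go to $+\infty$ along the set where $w\ne0$. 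Hence on the set $\Omega_+=\{w\ne0\}$ we have $|u_k(x)|\to\infty$. Now the monotonicity hypothesis~\eqref{fmonotonar} plays the role that~\eqref{mu0} or~\eqref{mathcalF} played before: it forces $\mathcal F(x,t)=\tfrac12 tf(x,t)-F(x,t)$ to be nonnegative for $|t|$ large and monotone in $|t|$, so that from the identity
\begin{equation*}
\mathcal J_{K,\lambda}(u_k)-\tfrac12\,\mathcal J'_{K,\lambda}(u_k)(u_k)=\int_\Omega\mathcal F(x,u_k(x))\,dx
\end{equation*}
the left-hand side stays bounded while the right-hand side, by Fatou applied on $\Omega_+$ together with the lower bound on $\mathcal F$ coming from~\eqref{fmonotonar} and~\eqref{Flimite}, diverges to $+\infty$ — a contradiction. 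I would mirror here the argument of Liu~\cite{liu2} adapted to the nonlocal quadratic form; the technical care needed is in verifying that~\eqref{fmonotonar} together with~\eqref{fcontinua} indeed yields $\mathcal F(x,t)\ge\mathcal F(x,\bar t)\wedge\mathcal F(x,-\bar t)\ge0$ for $|t|\ge\bar t$ and that $\mathcal F(x,t)\to\infty$ as $|t|\to\infty$ (this uses $f(x,t)/t$ increasing plus superlinearity~\eqref{Flimite}), and in handling the set where $|u_k|$ stays bounded, where $\mathcal F$ is merely bounded below by a constant and contributes harmlessly. Once boundedness of Palais--Smale sequences is established, the Fountain Theorem applies verbatim and produces the unbounded sequence of critical values, completing the proof.
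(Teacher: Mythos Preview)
Your geometric verification and overall Fountain-Theorem scheme match the paper. The genuine gap is in the compactness argument, in your dichotomy on the weak limit $w$ of $w_k=u_k/\|u_k\|$. You claim that $w\not\equiv0$, asserting that if $w\equiv0$ then $\int_\Omega F(x,u_k)/\|u_k\|^2\to0$; but this is unjustified---from $\mathcal J_{K,\lambda}(u_k)/\|u_k\|^2\to0$ and $\|w_k\|_{L^2(\Omega)}\to0$ one only obtains that this integral tends to $\tfrac12$, which is no contradiction at all. The Fatou argument based on \eqref{Flimite} lives on the set $\{w\neq0\}$ and therefore rules out the case $w\not\equiv0$, not the case $w\equiv0$. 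You have the roles of the two cases reversed: the case $w\equiv0$ is the hard one, and that is precisely where hypothesis \eqref{fmonotonar} must enter. The case $w\not\equiv0$, where you deploy \eqref{fmonotonar}, is in fact disposed of using \eqref{Flimite} alone.

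The paper (following Jeanjean and Liu) treats the case $w\equiv0$ by choosing $t_j\in[0,1]$ maximizing $t\mapsto\mathcal J_{K,\lambda}(tu_j)$, showing $\mathcal J_{K,\lambda}(t_ju_j)\to+\infty$ from $w_k\to0$ in $L^q$, and then bounding
\[
\mathcal J_{K,\lambda}(t_ju_j)=\int_\Omega\mathcal F(x,t_ju_j)\,dx\le\int_\Omega\mathcal F(x,u_j)\,dx+C_1|\Omega|
\]
via the monotonicity of $\mathcal F$ that \eqref{fmonotonar} yields (Lemma~\ref{lemmaps3}). The right-hand side equals $\mathcal J_{K,\lambda}(u_j)-\tfrac12\langle\mathcal J'_{K,\lambda}(u_j),u_j\rangle+C_1|\Omega|$, which stays bounded for \emph{Cerami} sequences; for a mere Palais--Smale sequence this quantity is only $c+o(\|u_j\|)$, so your assertion that ``the left-hand side stays bounded'' in the identity $\mathcal J_{K,\lambda}(u_k)-\tfrac12\,\mathcal J'_{K,\lambda}(u_k)(u_k)=\int_\Omega\mathcal F(x,u_k)\,dx$ is likewise unjustified under the PS hypothesis alone. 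The paper verifies the Cerami condition for exactly this reason.
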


We remark that, due to the symmetry assumption~\eqref{fsimmetrica}, if $u$ is a weak solution of problem~\eqref{problemaKlambda}, then so is $-u$. Hence, our results give the existence of infinitely many pairs $\{u_j, -u_j\}_{j\in \NN}$ of weak solutions of \eqref{problemaKlambda}.

The proofs of Theorems~\ref{thmain1lambda}, \ref{thmain2lambda} and \ref{thmain3lambda} rely on the same arguments as used in \cite{molicaservadeizhang}. In certain steps of the proofs we only need some careful estimates of the term ${\displaystyle \lambda\|u\|_{L^2(\Omega)}^2}$.
More precisely, the strategy of our proofs consists in looking for infinitely many critical points for the energy functional associated with problem~\eqref{problemaKlambda}, namely here we apply the Fountain Theorem proved by Bartsch in \cite{bartsch}. For this purpose, we have to analyze the compactness properties of the functional and its geometric features. As for the compactness, when the nonlinearity satisfies the Ambrosetti-Rabinowitz assumption~\eqref{mu0}, we shall prove that the Palais--Smale condition is satisfied; when $f$ is assumed to satisfy the conditions \eqref{Flimite} and \eqref{mathcalF} or \eqref{fmonotonar}, the Cerami condition will be considered. In both cases the main difficulty is related to the proof of the boundedness of the Palais--Smale (or Cerami) sequence.

The geometry of the functional required by the Fountain Theorem consists in proving that the functional $\mathcal J_{K,\,\lambda}$ is negative in a ball in a suitable finite-dimensional subspace of $X_0$ and positive in a ball in an infinite-dimensional subspace.

Theorem~\ref{thmain1lambda} is the nonlocal analogue of \cite[Corollary~3.9]{willem}, where the limit case as $s\to 1$ (that is the Laplace case) was considered. Also, in \cite{servadeiKavian} the existence of infinitely many solutions of \eqref{problemaKlambda} was proved under assumptions on $f$ which were different from the ones considered here and only for the case when $q\in (2, 2^*-2s/(n-2s))$ in \eqref{crescita}, but in presence of a perturbation $h\in L^2(\Omega)$.

This paper is organized as follows. In Section~\ref{sec:preliminaries} we shall recall some preliminary notions and results. In Section~\ref{sec:AR} we shall discuss problem~\eqref{problemaKlambda} under the Ambrosetti-Rabinowitz condition and we shall prove Theorem~\ref{thmain1lambda}.  Section~\ref{sec:noAR} will be devoted to problem~\eqref{problemaKlambda} without the Ambrosetti-Rabinowitz condition and the proof of Theorem~\ref{thmain2lambda} and of Theorem~\ref{thmain3lambda} will be provided.

\section{Preliminaries}\label{sec:preliminaries}
This section is devoted to some preliminary results.
First of all, the functional space~$X_0$ we shall work in is endowed with the norm
\begin{equation}\label{normaX0}
X_0\ni g\mapsto \|g\|_{X_0}:=\left(\int_{\RR^n\times \RR^n}|g(x)-g(y)|^2K(x-y)\,dx\,dy\right)^{1/2}
\end{equation}
and $\left(X_0, \|\cdot\|_{X_0}\right)$ is a Hilbert space (for this see \cite[Lemma~7]{svmountain}), with the following scalar product
\begin{equation}\label{scalarproduct}
\langle u,v\rangle_{X_0}:=\int_{\RR^n\times \RR^n}
\big( u(x)-u(y)\big) \big( v(x)-v(y)\big)\,K(x-y)\,dx\,dy\,.
\end{equation}

The usual fractional
Sobolev space $H^s(\Omega)$ is endowed with the so-called \emph{Gagliardo norm} (see, for instance \cite{adams, valpal}) given by
\begin{equation}\label{gagliardonorm}
\|g\|_{H^s(\Omega)}:=\|g\|_{L^2(\Omega)}+
\Big(\int_{\Omega\times
\Omega}\frac{\,\,\,|g(x)-g(y)|^2}{|x-y|^{n+2s}}\,dx\,dy\Big)^{1/2}\,.
\end{equation}
Note that, even in the model case in which $K(x)=|x|^{-(n+2s)}$, the norms
in \eqref{normaX0} and \eqref{gagliardonorm}
are not the same: this makes the space $X_0$ not equivalent to the usual fractional Sobolev spaces and the classical fractional Sobolev space approach is not sufficient for studying our problem from a variational point of view.

We recall that by \cite[Lemma~5.1]{sv} the space $X_0$ is non-empty, since $C^2_0 (\Omega)\subseteq X_0$ and that
for a general kernel $K$ satisfying conditions~\eqref{kernel} and \eqref{kernelfrac}, the following inclusion holds
$$X_0\subseteq \big\{g\in H^s(\RR^n) : g=0 \,\,\, \mbox{a.e. in}\,\,\, \RR^n\setminus \Omega\big\}\,,$$
while in the model case $K(x)=|x|^{-(n+2s)}$\,, the following characterization is valid:
$$X_0=\big\{g\in H^s(\RR^n) : g=0 \,\,\, \mbox{a.e. in}\,\,\, \RR^n\setminus \Omega\big\}\,.$$
For further details on $X$ and $X_0$ we refer to \cite{sv, svmountain,
svlinking, servadeivaldinociBN}, where various properties of these spaces were
proved; for more details on the fractional Sobolev spaces $H^s$ we refer
to~\cite{valpal} and the references therein.

Finally, we recall that the eigenvalue problem driven by $-\mathcal L_K$, namely
\begin{equation}\label{problemaautovalori}
\left\{\begin{array}{ll}
-\mathcal L_K u=\lambda u & \mbox{in } \Omega
\\
u=0 & \mbox{in } \RR^{n}\setminus\Omega\,,
\end{array}
\right.
\end{equation}
possesses a divergent sequence of positive eigenvalues
$$\lambda_1<\lambda_2\le \dots \le \lambda_k\le \lambda_{k+1}\le \dots\,,$$
whose corresponding eigenfunctions will be denoted by $e_k$\,. By \cite[Proposition~9]{svlinking}, we know that $\{e_k\}_{k\in \NN}$ can be chosen in such a way that this sequence provides an orthonormal basis in $L^2(\Omega)$ and an orthogonal basis in $X_0$\,. Further properties of the spectrum of the operator~$-\mathcal L_K$ can be found in \cite[Proposition~2.3]{sY}, \cite[Proposition~9 and Appendix~A]{svlinking} and \cite[Proposition~4]{servadeivaldinociBNLOW}\,.

\subsection{The Fountain Theorem}\label{subsec:fountain}
In order to prove our main results, stated in Theorem~\ref{thmain1lambda}, Theorem~\ref{thmain2lambda} and Theorem~\ref{thmain3lambda}, in the sequel we shall apply the Fountain Theorem due to Bartsch (see \cite{bartsch}), which, under suitable compactness and geometric assumptions on a functional, provides the existence of an unbounded sequence of critical value for it.

The compactness condition assumed in the Fountain Theorem is the well-known {\em Palais--Smale condition} (see, for instance, \cite{struwe, willem}), which in our framework reads as follows:
\begin{center}
$\mathcal J_{K,\,\lambda}$ satisfies the {\em Palais--Smale compactness condition} at level $c\in \RR$\\
if any sequence $\{u_j\}_{j\in\NN}$ in $X_0$ such that\\
$\mathcal J_{K,\,\lambda}(u_j)\to c \,\, \mbox{and}\,\, \sup\Big\{
\big|\langle\,\mathcal J_{K,\,\lambda}'(u_j),\varphi\,\rangle \big|\,: \;
\varphi\in X_0\,,
\|\varphi\|_{X_0}=1\Big\}\to 0$
as $j\to +\infty$,\\
admits a strongly convergent subsequence in $X_0$\,.
\end{center}

In \cite{cerami1, cerami2} Cerami introduced the so-called {\em Cerami condition}, as a weak version of the Palais--Smale assumption. With our notation, it can be written as follows:
\begin{center}
$\mathcal J_{K,\,\lambda}$ satisfies the {\em Cerami compactness condition} at level $c\in \RR$\\
if any sequence $\{u_j\}_{j\in\NN}$ in $X_0$ such that\\
$\mathcal J_{K,\,\lambda}(u_j)\to c \,\, \mbox{and}\,\, (1+\|u_j\|)\sup\Big\{
\big|\langle\,\mathcal J_{K,\,\lambda}'(u_j),\varphi\,\rangle \big|\,: \;
\varphi\in X_0\,,
\|\varphi\|_{X_0}=1\Big\}\to 0$\\
as $j\to +\infty$,
admits a strongly convergent subsequence in $X_0$\,.
\end{center}

When the right-hand side $f$ of problem~\eqref{problemaKlambda} satisfies the Ambrosetti-Rabinowitz condition, we shall prove in the sequel that the corresponding energy functional~$\mathcal J_{K,\,\lambda}$ satisfies the Palais--Smale compactness assumption, while, when we remove the Ambrosetti-Rabinowitz condition~\eqref{mu0} and we replace it with assumptions \eqref{Flimite} and \eqref{mathcalF} or \eqref{fmonotonar}, we shall show that $\mathcal J_{K,\,\lambda}$ satisfies the Cerami condition.

Following the notation in \cite[Theorem~2.5]{bartsch} (see also \cite{willem}), we put in the sequel for any $k\in \NN$
$$Y_k:=\mbox{span}\{e_1, \ldots,e_k\}$$
and
$$Z_k:=\overline{\mbox{span}\{e_k, e_{k+1}, \dots \}}\,.$$
Note that, since $Y_k$ is finite-dimensional, all norms on $Y_k$ are equivalent and this will be used in the sequel.

Thanks to these notations, the geometric assumptions of the Fountain Theorem in our framework read as follows:
\begin{itemize}
\item[$(i)$] $a_{k}:=\max\Big\{\mathcal J_{K,\,\lambda}(u): u\in Y_k, \|u\|_{X_0}=r_{k}\Big\}\leq 0$;
\item[$(ii)$] $b_{k}:=\inf\Big\{\mathcal J_{K,\,\lambda}(u): u\in Z_k, \|u\|_{X_0}=\gamma_k\Big\}\to \infty$ as $k\to \infty$.
\end{itemize}

\section{Nonlinearities satisfying the Ambrosetti-Rabinowitz condition}\label{sec:AR}
This section is devoted to problem~\eqref{problemaKlambda} in presence of a nonlinear term satisfying condition~\eqref{mu0}.
In this framework we shall prove the following result about the compactness of the functional~$\mathcal J_{K,\,\lambda}$:
\begin{proposition}\label{propositionps1}
Let $\lambda\in \RR$ and $f:\overline \Omega \times \RR \to \RR$ be a function satisfying conditions
\eqref{fcontinua}--\eqref{mu0}. Then $\mathcal J_{K,\,\lambda}$ satisfies the Palais--Smale condition at any level $c\in \RR$\,.
\end{proposition}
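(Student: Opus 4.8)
The plan is the standard two-step strategy: first show that any Palais--Smale sequence $\{u_j\}_{j\in\NN}$ at level $c$ is bounded in $X_0$, and then upgrade weak convergence to strong convergence using the Hilbert space structure of $X_0$ together with the compactness of the Sobolev embedding.

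\emph{Boundedness of the PS sequence.} Let $\{u_j\}$ satisfy $\mathcal J_{K,\,\lambda}(u_j)\to c$ and $\|\mathcal J_{K,\,\lambda}'(u_j)\|_{X_0^*}\to 0$. For $j$ large one has $\mathcal J_{K,\,\lambda}(u_j)\le c+1$ and $\langle\mathcal J_{K,\,\lambda}'(u_j),u_j\rangle\ge -\|u_j\|_{X_0}$. The first step is to form the combination $\mathcal J_{K,\,\lambda}(u_j)-\frac1\mu\langle\mathcal J_{K,\,\lambda}'(u_j),u_j\rangle$ with $\mu>2$ from \eqref{mu0}. In this combination the quadratic Gagliardo term becomes $\left(\frac12-\frac1\mu\right)\|u_j\|_{X_0}^2$, the $\lambda$-term becomes $-\left(\frac12-\frac1\mu\right)\lambda\|u_j\|_{L^2(\Omega)}^2$, and the nonlinear part is $\int_\Omega\left(\frac1\mu u_jf(x,u_j)-F(x,u_j)\right)dx$, which by \eqref{mu0} (used on $\{|u_j|\ge r\}$) and the boundedness of $f,F$ on the compact set $\overline\Omega\times[-r,r]$ is bounded below by a constant independent of $j$. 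This yields
\[
\left(\tfrac12-\tfrac1\mu\right)\|u_j\|_{X_0}^2\le C_1+C_2\|u_j\|_{X_0}+\left(\tfrac12-\tfrac1\mu\right)|\lambda|\,\|u_j\|_{L^2(\Omega)}^2 .
\]
Now comes the point singled out in the introduction as needing care: the term $|\lambda|\,\|u_j\|_{L^2(\Omega)}^2$. When $\lambda\le\lambda_1$ (or $\lambda<0$) this is harmless, but for general $\lambda\in\RR$ one cannot simply absorb it by coercivity of the quadratic form. Instead I would control it by the embedding $\|u_j\|_{L^2(\Omega)}\le C\|u_j\|_{X_0}$ combined with an interpolation/Young trick, or more robustly: if $\{u_j\}$ were unbounded, set $v_j:=u_j/\|u_j\|_{X_0}$, so $\|v_j\|_{X_0}=1$ and, up to a subsequence, $v_j\rightharpoonup v$ in $X_0$ and $v_j\to v$ in $L^2(\Omega)$. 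Dividing $\mathcal J_{K,\,\lambda}(u_j)\le c+1$ by $\|u_j\|_{X_0}^2$ and using \eqref{Fmu} (which follows from \eqref{mu0}) to get $\int_\Omega F(x,u_j)\,dx\ge a_3\|u_j\|_{L^\mu(\Omega)}^\mu-a_4|\Omega|$ together with $\mu>2$, one derives a contradiction: the left side tends to $0$ while the $F$-term, after normalization, forces $v\not\equiv0$ on the set where $|u_j|\to\infty$ and then grows faster than the quadratic terms can compensate. I expect this to be the main obstacle and the place where the ``careful estimates of $\lambda\|u\|_{L^2(\Omega)}^2$'' mentioned in the paper are actually used; it should mirror the argument in \cite{svlinking} and \cite{molicaservadeizhang}.

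\emph{From weak to strong convergence.} Once $\{u_j\}$ is bounded, since $X_0$ is a Hilbert space (see \cite[Lemma~7]{svmountain}) we may pass to a subsequence with $u_j\rightharpoonup u$ in $X_0$; by the compact embedding $X_0\hookrightarrow\hookrightarrow L^q(\Omega)$ for $q\in(2,2^*)$ (and into $L^2(\Omega)$) we get $u_j\to u$ strongly in $L^q(\Omega)$ and in $L^2(\Omega)$, and $u_j\to u$ a.e. Using the growth bound \eqref{crescita} and the generalized dominated convergence theorem, $\int_\Omega f(x,u_j)(u_j-u)\,dx\to0$ and similarly the $\lambda$-term $\lambda\int_\Omega u_j(u_j-u)\,dx\to0$. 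Writing $\langle\mathcal J_{K,\,\lambda}'(u_j),u_j-u\rangle\to0$ (from $\|\mathcal J_{K,\,\lambda}'(u_j)\|_{X_0^*}\to0$ and boundedness of $u_j-u$) and expanding,
\[
\langle u_j,u_j-u\rangle_{X_0}=\langle\mathcal J_{K,\,\lambda}'(u_j),u_j-u\rangle+\lambda\int_\Omega u_j(u_j-u)\,dx+\int_\Omega f(x,u_j)(u_j-u)\,dx\to0 .
\]
Since also $\langle u,u_j-u\rangle_{X_0}\to0$ by weak convergence, subtracting gives $\|u_j-u\|_{X_0}^2=\langle u_j-u,u_j-u\rangle_{X_0}\to0$, i.e. $u_j\to u$ strongly in $X_0$. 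This completes the verification of the Palais--Smale condition at every level $c\in\RR$.
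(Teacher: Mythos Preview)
Your proposal is correct, but the boundedness step takes a different route from the paper and is a bit underspecified in places.

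For $\lambda>0$ the paper does \emph{not} argue by contradiction. Instead it replaces $1/\mu$ by $1/\sigma$ with some $\sigma\in(2,\mu)$ in the combination $\mathcal J_{K,\lambda}(u_j)-\tfrac1\sigma\langle\mathcal J_{K,\lambda}'(u_j),u_j\rangle$. The point is that then the nonlinear remainder is not merely bounded below by a constant but, via \eqref{mu0} and \eqref{Fmu}, contributes a term $\big(\tfrac\mu\sigma-1\big)\int_{\{|u_j|\ge r\}}F(x,u_j)\ge a_3\big(\tfrac\mu\sigma-1\big)\|u_j\|_{L^\mu}^\mu-C$. Young's inequality $\|u_j\|_{L^2}^2\le\tfrac{2\varepsilon}{\mu}\|u_j\|_{L^\mu}^\mu+C_\varepsilon$ then lets the $\lambda$--$L^2$ term be absorbed by this $L^\mu$ gain for $\varepsilon$ small, yielding boundedness directly. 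Your first suggestion (``embedding $\|u_j\|_{L^2}\le C\|u_j\|_{X_0}$ plus Young'') is not the same thing: with only the $L^2$--$X_0$ embedding you would need $\lambda<\lambda_1$, so that path does not close for general $\lambda$.

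Your second, ``more robust'' suggestion (normalize $v_j=u_j/\|u_j\|_{X_0}$ and derive a contradiction) does work, and is in fact the mechanism the paper uses later for the Cerami condition in Proposition~\ref{propositionps2}. To make it precise here: dividing your displayed inequality by $\|u_j\|_{X_0}^2$ gives $\|v\|_{L^2}^2\ge 1/\lambda>0$, hence $v\not\equiv0$; since comparing \eqref{Fmu} with \eqref{cond22F} forces $\mu\le q<2^*$, the compact embedding gives $v_j\to v$ in $L^\mu(\Omega)$ with $\|v\|_{L^\mu}>0$, and then
\[
\frac{1}{\|u_j\|_{X_0}^2}\int_\Omega F(x,u_j)\,dx\;\ge\; a_3\,\|u_j\|_{X_0}^{\mu-2}\,\|v_j\|_{L^\mu}^\mu-\frac{a_4|\Omega|}{\|u_j\|_{X_0}^2}\;\to\;+\infty,
\]
contradicting $\mathcal J_{K,\lambda}(u_j)/\|u_j\|_{X_0}^2\to0$. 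So your boundedness argument is valid once fleshed out; it trades the paper's explicit Young-inequality estimate for a compactness/blow-up argument.

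Your Step~2 is correct and in fact slightly more streamlined than the paper's version, which instead shows $\|u_j\|_{X_0}\to\|u_\infty\|_{X_0}$ (by testing $\mathcal J_{K,\lambda}'(u_j)$ against $u_j$ and against $u_\infty$) and then concludes via the Hilbert-space identity; your pairing against $u_j-u$ reaches the same conclusion in one line.
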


\begin{proof}
Let $c\in \RR$ and let $\{u_j\}_{j\in\NN}$ be a
sequence in $X_0$ such that
\begin{equation}\label{Jc0}
\mathcal J_{K,\,\lambda}(u_j)\to c
\end{equation}
and
\begin{equation}\label{J'00}
\sup\Big\{ \big|\langle\,\mathcal J_{K,\,\lambda}'(u_j),\varphi\,\rangle \big|\,: \;
\varphi\in
X_0\,, \|\varphi\|_{X_0}=1\Big\}\to 0
\end{equation}
as $j\to +\infty$.

We split the proof into two steps. First, we show that the sequence $\{u_j\}_{j\in\NN}$ is bounded in $X_0$ and then that it admits a strongly convergent subsequence in $X_0$\,.
In showing the boundedness of the sequence $\{u_j\}_{j\in\NN}$ we have to treat separately the case when the parameter $\lambda\leq 0$ and $\lambda>0$.

\begin{step}\label{step1ps} The sequence $\{u_j\}_{j\in\NN}$ is bounded in $X_0$\,.
{\rm For any $j\in \NN$ it easily follows by \eqref{Jc0} and \eqref{J'00}
that there exists $\kappa>0$ such that
$$\Big|\langle \mathcal J_{K,\,\lambda}'(u_j), \frac{u_j}{\|u_j\|_{X_0}}\rangle\Big| \leq \kappa$$
and
$$|\mathcal J_{K,\,\lambda}(u_j)|\leq \kappa\,,$$
so that
\begin{equation}\label{addPS}
\mathcal J_{K,\,\lambda}(u_j)-\frac 1 \mu \langle \mathcal J_{K,\,\lambda}'(u_j), u_j\rangle\leq \kappa \left(1+ \|u_j\|_{X_0}\right)\,,
\end{equation}
where $\mu$ is the parameter given by \eqref{mu0}.

By invoking \eqref{crescita} and integrating it is easily seen that for any $x\in \overline\Omega$ and for any $t\in \RR$
\begin{equation}\label{cond22F}
|F(x,t)|\leq a_1\,|t|+\frac{a_2}{q}\, |t|^{q}\,.
\end{equation}
Hence, by \eqref{cond22F} and again by \eqref{crescita}, we have that for any $j\in \NN$
\begin{equation}\label{uj<r}
\begin{aligned} & \Big|\int_{\Omega \cap\{|u_j|\leq
\,r\}}\Big(F(x, u_j(x))-\frac 1 \mu\, f(x,
u_j(x))\, u_j(x)\,\Big)\,dx\Big|\\
& \qquad \qquad \qquad \leq \left(a_1 r+\frac{a_2}{q}\,r^q+\frac{a_1}{\mu} r+
\frac{a_2}{\mu}\, r^q\right)|\Omega|=:\tilde \kappa\,.
\end{aligned}
\end{equation}

Now, assume that $\lambda\leq 0$. Then, thanks to \eqref{mu0} and \eqref{uj<r}, we get that
\begin{equation}\label{jj'0}
\begin{aligned}
\mathcal J_{K,\,\lambda}(u_j)-\frac 1 \mu \langle \mathcal J_{K,\,\lambda}'(u_j), u_j\rangle & = \left(\frac 1 2 -\frac 1 \mu\right)\left(\|u_j\|_{X_0}^2-\lambda\|u_j\|_{L^2(\Omega)}^2\right)\\
& \qquad  -\int_\Omega \Big(F(x, u_j(x))-\frac 1 \mu f(x, u_j(x)) \,u_j(x)\Big)\,dx\\
& \geq \left(\frac 1 2 -\frac 1 \mu\right)\|u_j\|_{X_0}^2\\
& \qquad  -\int_\Omega \Big(F(x, u_j(x))-\frac 1 \mu f(x, u_j(x)) \,u_j(x)\Big)\,dx\\
& \geq \left(\frac 1 2 -\frac 1 \mu\right)\|u_j\|_{X_0}^2\\
& \qquad -\int_{\Omega \cap\{|u_j|\leq r\}}\Big(F(x, u_j(x))-\frac 1 \mu\, f(x, u_j(x))\,u_j(x)\Big)\,dx\\
& \geq \left(\frac 1 2 -\frac 1 \mu\right)\|u_j\|_{X_0}^2-\tilde
\kappa
\end{aligned}
\end{equation}
for any $j\in \NN$\,.

By \eqref{addPS}, \eqref{jj'0} and the fact that $\mu>2$ we have that
$$\left(\frac 1 2 -\frac 1 \mu\right)\|u_j\|_{X_0}^2 \leq \kappa\left(1+\|u_j\|_{X_0}\right)+\tilde \kappa$$
for any $j\in \NN$\,, that is $\{u_j\}_{j\in \NN}$ is bounded in $X_0$.

Now, let us consider the case when $\lambda>0$: the argument is the same as above, even though a more careful analysis is required. For reader's convenience, we prefer to give all the details.

First of all, let us fix $\sigma\in (2, \mu)$, where $\mu>2$ is given in assumption~\eqref{mu0}\,.
Arguing as above we get that for any $j\in \NN$
\begin{equation}\label{addaddPS}
\mathcal J_{K,\,\lambda}(u_j)-\frac 1 \sigma \langle \mathcal J_{K,\,\lambda}'(u_j), u_j\rangle\leq \kappa \left(1+ \|u_j\|_{X_0}\right)\,
\end{equation}
and
\begin{equation}\label{uj<rL}
\begin{aligned} & \Big|\int_{\Omega \cap\{|u_j|\leq
\,r\}}\Big(F(x, u_j(x))-\frac 1 \sigma\, f(x,
u_j(x))\, u_j(x)\,\Big)\,dx\Big| \leq \tilde \kappa\,,
\end{aligned}
\end{equation}
for suitable positive $\kappa$ and $\tilde \kappa$\,.
Then, using \eqref{mu0}, \eqref{Fmu} and \eqref{uj<rL}, we have that for any $j\in \NN$
\begin{equation}\label{jj'0L}
\begin{aligned}
\mathcal J_{K,\,\lambda}(u_j)-\frac 1 \sigma \langle \mathcal J_{K,\,\lambda}'(u_j), u_j\rangle & = \left(\frac 1 2 -\frac 1 \sigma\right)\Big(\|u_j\|_{X_0}^2-\lambda \|u_j\|_{L^2(\Omega)}^2\Big)\\
& \quad  -\int_\Omega \Big(F(x, u_j(x))-\frac 1 \sigma\, f(x, u_j(x)) \,u_j(x)\Big)\,dx\\
& \geq \left(\frac 1 2 -\frac 1 \sigma\right)\Big(\|u_j\|_{X_0}^2-\lambda \|u_j\|_{L^2(\Omega)}^2\Big)\\
& \quad +\left(\frac \mu \sigma-1\right)\int_{\Omega \cap\{|u_j|\geq\, r\}} F(x, u_j(x))\,dx\\
& \quad -\int_{\Omega \cap\{|u_j|\leq\, r\}}\Big(F(x, u_j(x))-\frac 1 \sigma\, f(x, u_j(x))\,u_j(x)\Big)\,dx\\
& \geq \left(\frac 1 2 -\frac 1 \sigma\right)\Big(\|u_j\|_{X_0}^2-\lambda \|u_j\|_{L^2(\Omega)}^2\Big)\\
& \quad +\left(\frac \mu \sigma-1\right)\int_{\Omega \cap\{|u_j|\geq\, r\}} F(x, u_j(x))\,dx-\tilde
\kappa\\
& \geq \left(\frac 1 2 -\frac 1 \sigma\right)\Big(\|u_j\|_{X_0}^2-\lambda \|u_j\|_{L^2(\Omega)}^2\Big)\\
& \quad +a_3\left(\frac \mu \sigma-1\right)\|u_j\|_{L^\mu(\Omega)}^\mu-a_4\left(1 - \frac \mu \sigma\right)\,|\Omega|-\tilde\kappa\,.
\end{aligned}
\end{equation}
Furthermore, for any $\varepsilon>0$ the Young inequality (with
conjugate exponents~$\mu/2>1$
and~$\mu/(\mu-2)$) yields
\begin{equation}\label{youngPS}
\|u_j\|_{L^2(\Omega)}^2\leq \frac{2\varepsilon}{\mu}\,\|u_j\|_{L^\mu(\Omega)}^\mu+\frac{\mu-2}{\mu}\,\varepsilon^{-2/(\mu-2)}\,|\Omega|\,,
\end{equation}
so that, by \eqref{jj'0L} and \eqref{youngPS}, we can deduce that for any $j\in \NN$
\begin{equation}\label{young2}
\begin{aligned}
\mathcal J_{K,\,\lambda}(u_j)-\frac 1 \sigma \langle \mathcal J_{K,\,\lambda}'(u_j), u_j\rangle &
\geq \left(\frac 1 2 -\frac 1 \sigma\right)\|u_j\|_{X_0}^2-\lambda\left(\frac 1 2 -\frac 1 \sigma\right) \frac{2\varepsilon}{\mu}\,\|u_j\|_{L^\mu(\Omega)}^\mu\\
& \qquad -\lambda\left(\frac 1 2 -\frac 1 \sigma\right) \frac{\mu-2}{\mu}\,\varepsilon^{-2/(\mu-2)}\,|\Omega|\\
& \qquad +a_3\left(\frac \mu \sigma-1\right)\|u_j\|_{L^\mu(\Omega)}^\mu-a_4\left(1 - \frac \mu \sigma\right)\,|\Omega|-\tilde\kappa\\
& = \left(\frac 1 2 -\frac 1 \sigma\right)\|u_j\|_{X_0}^2\\
& \qquad +\Big[a_3\left(\frac \mu \sigma-1\right)-\lambda\left(\frac 1 2 -\frac 1 \sigma\right)\frac{2\varepsilon}{\mu}\Big]\|u_j\|_{L^\mu(\Omega)}^\mu-C_\varepsilon\,,
\end{aligned}
\end{equation}
where $C_\varepsilon$ is a constant such that $C_\varepsilon\to +\infty$ as $\varepsilon\to 0$, due to $\mu>\sigma>2$\,.

Now, choosing $\varepsilon$ so small that
$$a_3\left(\frac \mu \sigma-1\right)-\lambda\left(\frac 1 2 -\frac 1 \sigma\right)\frac{2\varepsilon}{\mu}>0\,,$$
by \eqref{young2}, we get for any $j\in \NN$
\begin{equation}\label{jj'20}
\mathcal J_{K,\,\lambda}(u_j)-\frac 1 \sigma \langle \mathcal J_{K,\,\lambda}'(u_j), u_j\rangle \geq \left(\frac 1 2 -\frac 1 \sigma\right)\|u_j\|_{X_0}^2-C_\varepsilon\,.
\end{equation}

Combining \eqref{addaddPS} and \eqref{jj'20}, we deduce that for any $j\in \NN$
$$\|u_j\|_{X_0}^2 \leq \kappa_*\left(1+\|u_j\|_{X_0}\right)$$
for a suitable positive constant $\kappa_*$\,. This proves that the Palais--Smale sequence $\{u_j\}_{j\in\NN}$ is bounded in $X_0$\,.

Hence Step~\ref{step1ps} is proved\,.}
\end{step}

\begin{step}\label{step2ps} Up to a subsequence, $\{u_j\}_{j\in\NN}$ strongly converges in $X_0$\,.
{\rm Since $\{u_j\}_{j\in\NN}$ is bounded in $X_0$ by Step~\ref{step1ps} and $X_0$ is a reflexive space
(being a Hilbert space, by \cite[Lemma~7]{svmountain}), up to a
subsequence, still denoted by $\{u_j\}_{j\in\NN}$, there exists $u_\infty \in X_0$
such that
\begin{equation}\label{convergenze0}
\begin{aligned}
 & \int_{\RR^n\times \RR^n}\big(u_j(x)-u_j(y)\big)\big(\varphi(x)-\varphi(y)\big) K(x-y)\,dx\,dy \to \\
& \qquad \qquad
\int_{\RR^n\times \RR^n}\big(u_\infty(x)-u_\infty(y)\big)\big(\varphi(x)-\varphi(y)\big)
K(x-y)\,dx\,dy  \quad \mbox{for any}\,\, \varphi\in X_0
\end{aligned}
\end{equation}
as $j\to +\infty$\,. Moreover, by \cite[Lemma~9]{servadeivaldinociBN}, up to a subsequence,
\begin{equation}\label{convergenze0bis}
\begin{aligned}
& u_j \to u_\infty \quad \mbox{in}\,\, L^2(\RR^n) \\
& u_j \to u_\infty \quad \mbox{in}\,\, L^q(\RR^n) \\
& u_j \to u_\infty \quad \mbox{a.e. in}\,\, \RR^n
\end{aligned}
\end{equation}
as $j\to +\infty$ and there exists $\ell\in L^q(\RR^n)$ such that
\begin{equation}\label{dominata20}
|u_j(x)|\leq \ell(x) \quad \mbox{a.e. in}\,\, \RR^n\,\quad \mbox{for any}\,\,j\in \NN
\end{equation}
(see, for instance, \cite[Theorem~IV.9]{brezis}).

By \eqref{crescita}, \eqref{convergenze0}--\eqref{dominata20}, the
fact that the map $t\mapsto f(\cdot, t)$ is continuous in $t\in \RR$
and the Dominated Convergence Theorem we get
\begin{equation}\label{convf0}
\int_\Omega f(x, u_j(x))u_j(x)\,dx \to \int_\Omega f(x, u_\infty(x))u_\infty(x)\,dx
\end{equation}
and
\begin{equation}\label{convfu0}
\int_\Omega f(x, u_j(x))u_\infty(x)\,dx \to \int_\Omega f(x, u_\infty(x))u_\infty(x)\,dx
\end{equation}
as $j\to +\infty$. Moreover, by \eqref{J'00} and Step~\ref{step1ps} we have that
$$\begin{aligned}
0\leftarrow & \langle \mathcal J_{K,\,\lambda}'(u_j), u_j\rangle\\
&= \int_{\RR^n\times \RR^n}|u_j(x)-u_j(y)|^2 K(x-y)\,dx\,dy -\lambda\int_\Omega |u_j(x)|^2\,dx - \int_\Omega f(x, u_j(x))u_j(x)\,dx
\end{aligned}$$
so that, by \eqref{convergenze0bis} and \eqref{convf0}, we can deduce that
\begin{equation}\label{convnorma10}
\int_{\RR^n\times \RR^n}|u_j(x)-u_j(y)|^2 K(x-y)\,dx\,dy\to
\lambda\int_\Omega |u_\infty(x)|^2\,dx + \int_\Omega f(x, u_\infty(x))u_\infty(x)\,dx
\end{equation}
as $j\to +\infty$.
Furthermore, again by \eqref{J'00}, we get
\begin{equation}\label{convj'0}
\begin{aligned}
0\leftarrow \langle \mathcal J_{K,\,\lambda}'(u_j), u_\infty\rangle & = \int_{\RR^n\times \RR^n}\big(u_j(x)-u_j(y)\big)\big(u_\infty(x)-u_\infty(y)\big) K(x-y)\,dx\,dy\\
 & \qquad -\lambda\int_\Omega u_j(x) u_\infty(x)\,dx - \int_\Omega f(x, u_j(x))u_\infty(x)\,dx
\end{aligned}
\end{equation}
as $j\to +\infty$.
By \eqref{convergenze0} with $\varphi=u_\infty$, \eqref{convergenze0bis}, \eqref{convfu0} and \eqref{convj'0} we obtain
\begin{equation}\label{convnorma20}
\begin{aligned}
\int_{\RR^n\times \RR^n}|u_\infty(x)-u_\infty(y)|^2 K(x-y)\,dx\,dy & = \lambda\int_\Omega |u_\infty(x)|^2\,dx\\
& \qquad +\int_\Omega f(x, u_\infty(x))u_\infty(x)\,dx\,.
\end{aligned}
\end{equation}
Thus \eqref{convnorma10} and \eqref{convnorma20} give us that
\begin{equation}\label{convnormaX00}
\|u_j\|_{X_0}\to \|u_\infty\|_{X_0}\,,
\end{equation}
as $j\to \infty$.

Finally, it is easy to see that
$$\begin{aligned}
\|u_j-u_\infty\|_{X_0}^2 & = \|u_j\|_{X_0}^2 + \|u_\infty\|_{X_0}^2 -2 \int_{\RR^n\times \RR^n} \big(u_j(x)-u_j(y)\big)\big(u_\infty(x)-u_\infty(y)\big) K(x-y)\,dx\,dy\\
& \to 2\|u_\infty\|_{X_0}^2-2\int_{\RR^n\times \RR^n}|u_\infty(x)-u_\infty(y)|^2 K(x-y)\,dx\,dy=0
\end{aligned}$$
as $j\to +\infty$, thanks to \eqref{convergenze0} and \eqref{convnormaX00}.
Therefore, the assertion of Step~\ref{step2ps} is proved. This concludes the proof of Proposition~\ref{propositionps1}.}
\end{step}
\end{proof}

Now, we are ready for proving Theorem~\ref{thmain1lambda}.

\subsection{Proof of Theorem~\ref{thmain1lambda}}\label{subsec:proofthmain1}
The idea consists in applying the Fountain Theorem. By Proposition~\ref{propositionps1}
we have that $\mathcal J_{K,\,\lambda}$ satisfies the Palais--Smale condition, while by \eqref{fsimmetrica}, we get that $\mathcal J_{K,\,\lambda}(-u)=\mathcal J_{K,\,\lambda}(u)$ for any $u \in X_0$. Then, it remains to study the geometry of the functional~$\mathcal J_{K,\,\lambda}$. For this purpose, we proceed by the following steps.
\begin{stepbis}\label{step1geometry} For any $k\in \NN$ there exists $r_k>0$ such that
$$a_{k}:=\max\Big\{\mathcal J_{K,\,\lambda}(u): u\in Y_k, \|u\|_{X_0}=r_{k}\Big\}\leq 0 \,.$$
{\rm By \eqref{Fmu}, we get that for any $u\in Y_k$
\begin{equation}\label{JYkreplaced}
\begin{aligned}
\mathcal J_{K,\,\lambda}(u) & \leq \frac{1}{2}\|u\|_{X_0}^{2}-\frac{\lambda}{2}\|u\|_{L^2(\Omega)}^{2}-a_3\|u\|_{L^{\mu}(\Omega)}^{\mu}+a_4|\Omega|\\
& \leq \frac{C_{k,\,\lambda}}{2}\|u\|_{X_0}^2-\hat C_{k,\,\mu}\|u\|_{X_0}^{\mu}+a_4|\Omega|
\end{aligned}
\end{equation}
for suitable positive constants  $C_{k,\,\lambda}$, depending on $k$ and $\lambda$, and $\hat C_{k,\,\mu}$, depending on $k$ and $\mu$\,. Here we used the fact that all the norms are equivalent in $Y_k$ .

As a consequence of \eqref{JYkreplaced}, we get that for any $u\in Y_k$ with $\|u\|_{X_0}=r_k$
$$\mathcal J_{K,\,\lambda}(u)\leq 0\,,$$
provided $r_k>0$ is large enough, due to the fact that $\mu>2$.
Thus Step~\ref{step1geometry} is proved.}
\end{stepbis}

\begin{stepbis}\label{lemma1}
Let $1\leq q<2^*$ and, for any $k\in \NN$, let
$$\beta _k:=\sup\Big\{\|u\|_{L^{q}(\Omega)}: u\in Z_k, \|u\|_{X_0}=1\Big\}\,.$$
Then $\beta_k\to 0$ as $k\to\infty$.

{\rm By definition of $Z_k$, we have that $Z_{k+1}\subset Z_k$ and so, as a consequence,
$0< \beta _{k+1}\leq \beta _k$ for any $k\in \NN$. Hence
\begin{equation}\label{betak}
\beta _k\to \beta
\end{equation}
as $k\to +\infty$, for some $\beta\geq 0$. Moreover, by definition of $\beta_k$, for any $k\in \NN$ there exists $u_k\in Z_k$ such that
\begin{equation}\label{uk}
\|u_{k}\|_{X_0}=1 \quad \mbox{and} \quad \|u_{k}\|_{L^{q}(\Omega)}>\beta_k/2\,.
\end{equation}

Since $X_0$ is a Hilbert space, and hence a reflexive Banach space, there exist $u_\infty\in X_0$ and a subsequence of $u_k$ (still denoted by $u_k$)
such that $u_{k}\to u_\infty$ weakly converges in $X_0$, that is
$$\langle u_k, \varphi \rangle_{X_0} \to \langle u_\infty, \varphi\rangle_{X_0} \qquad \mbox{for any}\,\,\, \varphi \in X_0$$
as $k\to +\infty$. Since ${\displaystyle \varphi=\sum_{j=1}^{+\infty}c_je_j}$\,, it follows that
$${\displaystyle \langle u_\infty, \varphi\rangle_{X_0}=\lim_{k\to +\infty}\langle u_k, \varphi \rangle_{X_0}= \lim_{k\to +\infty}\sum_{j=1}^{+\infty}c_j\langle u_k, e_j \rangle_{X_0}=0\,,}$$
thanks to the fact that the sequence $\{e_k\}_{k\in \NN}$ of eigenfunctions of $-\mathcal L_K$ is an orthogonal basis of $X_0$. Therefore
we can deduce that $u_\infty\equiv 0$. Hence by the Sobolev embedding theorem (see \cite[Lemma~9]{servadeivaldinociBN}), we get
\begin{equation}\label{ukLq}
u_{k}\to 0\quad \mbox{in}\,\,\, L^{q}(\Omega)
\end{equation}
as $k\to +\infty$. By \eqref{betak}, the fact that $\beta$ is nonnegative, and by \eqref{uk} and \eqref{ukLq},
we get that $\beta_k\to 0$ as $k\to +\infty$ and this concludes the proof of Step~\ref{lemma1}.}
\end{stepbis}

\begin{stepbis}\label{step2geometry} There exists $\gamma_k>0$ such that $$b_{k}:=\inf\Big\{\mathcal J_{K,\,\lambda}(u): u\in Z_k, \|u\|_{X_0}=\gamma_k\Big\}\to +\infty $$
as $k\to +\infty$\,.\\
{\rm By invoking \eqref{crescita} and integrating, it is easy to see that \eqref{cond22F} holds, and so, as a consequence, we get that there exists a constant $C>0$ such that
\begin{equation}\label{FC}
|F(x,t)|\leq C(1+|t|^{q})
\end{equation}
for any $x\in \overline \Omega$ and $t\in \RR$\,.
Then we obtain by \eqref{FC} for any $u\in Z_k\setminus\{0\}$
\begin{equation}\label{Jbetak}
\begin{aligned}
\mathcal J_{K,\,\lambda}(u)  & \geq \frac{1}{2}\|u\|_{X_0}^{2}-\frac \lambda 2\|u\|_{L^2(\Omega}^2-C\|u\|_{L^{q}(\Omega)}^{q}-C|\Omega| \\
& \geq C_{k,\,\lambda} \|u\|_{X_0}^2-C\Big\|\frac{u}{\|u\|_{X_0}}\Big\|_{L^q(\Omega)}^q \|u\|_{X_0}^q-C|\Omega|\\
& \geq C_{k,\,\lambda}\|u\|_{X_0}^{2}- C\beta_k^q\|u\|_{X_0}^q-C|\Omega|\\
& = \|u\|_{X_0}^{2}\left(C_{k,\,\lambda}-C\beta_k^q\|u\|_{X_0}^{q-2}\right)-C|\Omega|\,,
\end{aligned}
\end{equation}
where $\beta_k$ is defined as in Step~\ref{lemma1} and
$$C_{k,\,\lambda}=\begin{cases}
\frac 1 2 & \mbox{if} \quad \lambda\leq 0\\
\frac 1 2\left(1-\frac{\lambda}{\lambda_1}\right) & \mbox{if} \quad 0<\lambda<\lambda_1\\
\frac 1 2\left(1-\frac{\lambda}{\lambda_k}\right) & \mbox{if} \quad \lambda_k\leq \lambda<\lambda_{k+1}\,.\\
\end{cases}$$

Defining $\gamma_k$ as follows
$$\gamma_k=\Big(\frac{2C_{k,\,\lambda}}{q C\beta_k^q}\Big)^{1/(q-2)}\,,$$ it is easy to see that $\gamma_k\to +\infty$ as $k\to +\infty$, thanks to Step~\ref{lemma1}, the fact that $q>2$ and since $\{\lambda_k\}_{k\in \NN}$ is a divergent sequence. As a consequence of this and by \eqref{Jbetak} we get that for any $u\in Z_k$ with $\|u\|_{X_0}=\gamma_k$
$$\mathcal J_{K,\,\lambda}(u)\geq \|u\|_{X_0}^{2}\left(C_{k,\,\lambda}- C\beta_k^q\|u\|_{X_0}^{q-2}\right)-C|\Omega|=\left(1 -\frac 2 q\right)C_{k,\,\lambda}\gamma_k^2-C|\Omega|\to +\infty$$
as $k\to +\infty$. Thus Step~\ref{step2geometry} is completed.}
\end{stepbis}

Hence all the geometric features of the Fountain Theorem are satisfied and the proof of Theorem~\ref{thmain1lambda} is complete.

\medskip

We would like to emphasize that in the verification of the geometric structure of the functional~$\mathcal J_{K,\,\lambda}$ the Ambrosetti-Rabinowitz condition (namely, \eqref{Fmu}) was used only for proving Step~\ref{step1geometry}.

\section{Nonlinearities satisfying other superlinear conditions}\label{sec:noAR}
In this section we shall deal with problem~\eqref{problemaKlambda} when superlinear conditions on the term $f$ different from the Ambrosetti-Rabinowitz are satisfied.
In this framework we shall show that the functional~$\mathcal J_{K,\,\lambda}$ satisfies the Cerami condition, as well as the geometric requirements of the Fountain Theorem.

\subsection{Nonlinearities under the superlinear conditions~\eqref{Flimite} and \eqref{mathcalF}}\label{subsec:superlinear2}
First, we study the compactness properties of the functional~$\mathcal J_{K,\,\lambda}$, as stated in the following result:
\begin{proposition}\label{propositionps2}
Let $\lambda\in \RR$ and let $f:\overline \Omega \times \RR \to \RR$ be a function satisfying conditions
\eqref{fcontinua}, \eqref{crescita}, \eqref{fsimmetrica}, \eqref{Flimite} and \eqref{mathcalF}. Then, $\mathcal J_{K,\,\lambda}$ satisfies the Cerami condition at any level $c\in \RR$\,.
\end{proposition}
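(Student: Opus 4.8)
The plan is to follow the classical scheme for verifying the Cerami condition for superlinear problems without the Ambrosetti--Rabinowitz structure, with the only novelty being the careful handling of the term $\lambda\|u\|_{L^2(\Omega)}^2$, exactly as the authors announce. So let $\{u_j\}_{j\in\NN}\subset X_0$ be a Cerami sequence at level $c$, i.e.\ $\mathcal J_{K,\,\lambda}(u_j)\to c$ and $(1+\|u_j\|_{X_0})\|\mathcal J_{K,\,\lambda}'(u_j)\|_{X_0'}\to 0$. The crux is to prove that $\{u_j\}_{j\in\NN}$ is bounded in $X_0$; once this is done, the strong convergence of a subsequence follows exactly as in Step~\ref{step2ps} of Proposition~\ref{propositionps1} (weak limit $u_\infty$, compact embedding $X_0\hookrightarrow L^q(\RR^n)$ from \cite[Lemma~9]{servadeivaldinociBN}, dominated convergence applied to the terms with $f$, and then $\|u_j\|_{X_0}\to\|u_\infty\|_{X_0}$ together with weak convergence gives strong convergence in the Hilbert space $X_0$). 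I would explicitly say that this second part is verbatim the argument already given and not repeat it.

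For the boundedness, I would argue by contradiction: suppose $\|u_j\|_{X_0}\to+\infty$ and set $w_j:=u_j/\|u_j\|_{X_0}$, so $\|w_j\|_{X_0}=1$. Up to a subsequence $w_j\rightharpoonup w$ in $X_0$, $w_j\to w$ in $L^q(\Omega)$ and in $L^2(\Omega)$, and $w_j\to w$ a.e.\ in $\Omega$. Split into two cases according to whether $w\equiv 0$ or $w\not\equiv 0$. If $w\not\equiv 0$: on the set where $w\neq 0$ one has $|u_j(x)|\to+\infty$, and by \eqref{Flimite} together with the lower bound $F(x,t)\geq -C$ away from infinity (from continuity of $F$ on $\overline\Omega\times[-r,r]$ via \eqref{crescita}), Fatou's lemma gives $\int_\Omega F(x,u_j)/\|u_j\|_{X_0}^2\,dx\to+\infty$. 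But from $\mathcal J_{K,\,\lambda}(u_j)/\|u_j\|_{X_0}^2\to 0$ we get
\[
\int_\Omega \frac{F(x,u_j(x))}{\|u_j\|_{X_0}^2}\,dx=\frac12-\frac\lambda2\|w_j\|_{L^2(\Omega)}^2-\frac{\mathcal J_{K,\,\lambda}(u_j)}{\|u_j\|_{X_0}^2},
\]
whose right-hand side stays bounded (here is exactly where one controls $\lambda\|w_j\|_{L^2(\Omega)}^2$, using $\|w_j\|_{L^2(\Omega)}^2\to\|w\|_{L^2(\Omega)}^2\leq\|w\|_{X_0}^2/\lambda_1\leq 1/\lambda_1$ when $\lambda>0$), a contradiction.

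The harder case is $w\equiv 0$, and this is where condition~\eqref{mathcalF} enters. Following Jeanjean's trick, choose $t_j\in[0,1]$ with $\mathcal J_{K,\,\lambda}(t_ju_j)=\max_{t\in[0,1]}\mathcal J_{K,\,\lambda}(tu_j)$. For any fixed $R>0$, since $w\equiv 0$ we have $Rw_j\to 0$ in $L^q(\Omega)$ and in $L^2(\Omega)$, hence $\int_\Omega F(x,Rw_j)\,dx\to 0$ and $\lambda\|Rw_j\|_{L^2(\Omega)}^2\to 0$ by the $L^2$-convergence, so $\liminf_j\mathcal J_{K,\,\lambda}(t_ju_j)\geq\mathcal J_{K,\,\lambda}(Rw_j)=\tfrac12R^2-o(1)$; since $R$ is arbitrary, $\mathcal J_{K,\,\lambda}(t_ju_j)\to+\infty$. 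On the other hand $\mathcal J_{K,\,\lambda}(0)=0$ and $\mathcal J_{K,\,\lambda}(u_j)\to c$, so $t_j\in(0,1)$ for large $j$ and $\frac{d}{dt}\big|_{t=t_j}\mathcal J_{K,\,\lambda}(tu_j)=0$, i.e.\ $\langle\mathcal J_{K,\,\lambda}'(t_ju_j),t_ju_j\rangle=0$. Then, using the definition $\mathcal F(x,t)=\tfrac12 tf(x,t)-F(x,t)$,
\[
\mathcal J_{K,\,\lambda}(t_ju_j)=\mathcal J_{K,\,\lambda}(t_ju_j)-\tfrac12\langle\mathcal J_{K,\,\lambda}'(t_ju_j),t_ju_j\rangle=\int_\Omega\mathcal F(x,t_ju_j(x))\,dx,
\]
so $\int_\Omega\mathcal F(x,t_ju_j(x))\,dx\to+\infty$. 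Now apply \eqref{mathcalF}: since $0<t_j\le 1$, for each $x$ with $u_j(x)>0$ we have $\mathcal F(x,t_ju_j(x))\leq\gamma\,\mathcal F(x,u_j(x))$ (and symmetrically for $u_j(x)<0$, using \eqref{fsimmetrica} which makes $\mathcal F$ even in $t$), hence (after absorbing the contribution of a bounded region $\{|u_j|\leq r\}$ where $\mathcal F$ is bounded on $\overline\Omega\times[-r,r]$) $\int_\Omega\mathcal F(x,t_ju_j)\,dx\leq\gamma\int_\Omega\mathcal F(x,u_j)\,dx+C$. But from $\mathcal J_{K,\,\lambda}(u_j)\to c$ and $(1+\|u_j\|_{X_0})\|\mathcal J'_{K,\,\lambda}(u_j)\|\to 0$,
\[
\int_\Omega\mathcal F(x,u_j(x))\,dx=\mathcal J_{K,\,\lambda}(u_j)-\tfrac12\langle\mathcal J_{K,\,\lambda}'(u_j),u_j\rangle\to c,
\]
which is bounded — contradicting $\int_\Omega\mathcal F(x,t_ju_j)\,dx\to+\infty$. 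This contradiction rules out $w\equiv 0$ and completes the boundedness proof; as noted, strong convergence then follows as in Step~\ref{step2ps}.

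I expect the main obstacle to be the bookkeeping in the $w\equiv 0$ case: making sure the Jeanjean max-point argument is carried out cleanly, that \eqref{mathcalF} is applied with the correct sign handling on $\{u_j>0\}$ versus $\{u_j<0\}$ (where the symmetry \eqref{fsimmetrica} is needed to know $\mathcal F$ is even), and that the $\lambda\|u\|_{L^2(\Omega)}^2$ terms are absorbed using the $L^2(\Omega)$-convergence $w_j\to 0$ rather than anything more delicate. The algebraic manipulations with $\mathcal J_{K,\,\lambda}$ and its derivative are routine and parallel to those in Section~\ref{sec:AR}.
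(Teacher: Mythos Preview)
Your proposal is correct and follows essentially the same route as the paper: contradiction via the normalized sequence, the $v_\infty\not\equiv 0$ case handled by Fatou together with a global lower bound on $F$, and the $v_\infty\equiv 0$ case handled by Jeanjean's max-point trick combined with \eqref{mathcalF} (extended to negative arguments via the evenness of $\mathcal F$ coming from \eqref{fsimmetrica}), with the strong-convergence step deferred to Step~\ref{step2ps}. The only superfluous wrinkle is your ``absorbing the contribution of a bounded region $\{|u_j|\leq r\}$'' when comparing $\int_\Omega\mathcal F(x,t_ju_j)$ and $\int_\Omega\mathcal F(x,u_j)$: since \eqref{mathcalF} plus evenness gives $\mathcal F(x,t')\leq\gamma\,\mathcal F(x,t)$ whenever $0<|t'|\leq|t|$ (and both sides vanish at $t=0$), the pointwise inequality holds everywhere and no cut-off is needed.
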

\begin{proof}
Let $c\in \RR$ and let $\{u_j\}_{j\in\NN}$ be a Cerami sequence in $X_0$, that is let $\{u_j\}_{j\in\NN}$ be such that
\begin{equation}\label{Jc0Cerami}
\mathcal J_{K,\,\lambda}(u_j)\to c
\end{equation}
and
\begin{equation}\label{J'00Cerami}
(1+\|u_j\|)\sup\Big\{ \big|\langle\,\mathcal J_{K,\,\lambda}'(u_j),\varphi\,\rangle \big|\,: \;
\varphi\in
X_0\,, \|\varphi\|_{X_0}=1\Big\}\to 0
\end{equation}
as $j\to +\infty$.

First, we show that the sequence~$\{u_j\}_{j\in\NN}$ is bounded in $X_0$. For this purpose we argue as in the proof of \cite[Lemma~2.2]{fangliu}. Suppose to the contrary that $\{u_j\}_{j\in\NN}$ is unbounded in $X_0$, that is suppose that, up to a subsequence, still denoted by $\{u_j\}_{j\in\NN}$,
\begin{equation}\label{contradictionCerami}
\|u_j\|_{X_0} \to +\infty
\end{equation}
as $j\to +\infty$\,.

By \eqref{J'00Cerami} and \eqref{contradictionCerami}, it is easy to see that
\begin{equation}\label{J'00CeramiPS}
\sup\Big\{ \big|\langle\,\mathcal J_{K,\,\lambda}'(u_j),\varphi\,\rangle \big|\,: \;
\varphi\in
X_0\,, \|\varphi\|_{X_0}=1\Big\}\to 0
\end{equation}
and so, also
\begin{equation}\label{J'00CeramiPSadd}
\sup\Big\{ \big|\langle\,\mathcal J_{K,\,\lambda}'(u_j),\varphi\,\rangle \big|\,: \;
\varphi\in
X_0\,, \|\varphi\|_{X_0}=1\Big\}\cdot \|u_j\|_{X_0}\to 0
\end{equation}
as $j\to +\infty$

Now, for any $j\in \NN$, let
\begin{equation}\label{vj}
v_j =\frac{u_j}{\|u_j\|_{X_0}}\,.
\end{equation}
Of course, the sequence $\{v_j\}_{j\in\NN}$ is bounded in $X_0$ and so, by \cite[Lemma~9]{servadeivaldinociBN}, up to a subsequence, there exists $v_\infty\in X_0$ such that
\begin{equation}\label{convergenzevj}
\begin{aligned}
& v_j \to v_\infty \quad \mbox{in}\,\, L^2(\RR^n) \\
& v_j \to v_\infty \quad \mbox{in}\,\, L^q(\RR^n) \\
& v_j \to v_\infty \quad \mbox{a.e. in}\,\, \RR^n
\end{aligned}
\end{equation}
as $j\to +\infty$ and there exists $\ell\in L^q(\RR^n)$ such that
\begin{equation}\label{dominatavj}
|v_j(x)|\leq \ell(x) \quad \mbox{a.e. in}\,\, \RR^n\,\quad \mbox{for any}\,\,j\in \NN
\end{equation}
(see \cite[Theorem~IV.9]{brezis}).
In the sequel we shall separately consider the cases when $v_\infty\equiv 0$ and $v_\infty\not \equiv 0$ and we shall prove that in both cases a contradiction occurs.

{\bf Case 1:} Suppose that
\begin{equation}\label{vinfty0}
v_\infty\equiv 0\,.
\end{equation}
As in \cite{jeanjean}, we can say that for any $j\in \NN$ there exists $t_j \in[0,1]$ such that
\begin{equation}\label{maxtj}
\mathcal J_{K,\,\lambda}(t_ju_j) = \max_{t\in [0,1]}\mathcal J_{K,\,\lambda}(tu_j)\,.
\end{equation}
Since \eqref{contradictionCerami} holds, for any $m\in \NN$, we can choose $r_m=2\sqrt{m}$ such that \begin{equation}\label{rm}
r_m\|u_j\|_{X_0}^{-1} \in (0,1)\,,
\end{equation}
provided $j$ is large enough, say $j>\bar \jmath$\,, with $\bar \jmath=\bar \jmath\,(m)$.

By \eqref{convergenzevj}, \eqref{vinfty0} and the continuity of the function $F$, we get that
\begin{equation}\label{addlambda}
\int_\Omega |r_mv_j(x)|^2\,dx \to 0
\end{equation}
and
\begin{equation}\label{Fae}
F(x, r_mv_j(x))\to F(x, r_mv_\infty(x))\quad \mbox{a.e.}\,\,\, x\in \Omega
\end{equation}
as $j\to +\infty$ for any $m\in \NN$\,. Moreover, integrating \eqref{crescita} and taking into account \eqref{dominatavj}, we have that
\begin{equation}\label{dominataF}
\begin{aligned}
|F(x,r_mv_j(x))| & \leq a_1\,|r_mv_j(x)|+\frac{a_2}{q}\, |r_mv_j(x)|^{q}\\
& \leq a_1\,r_m\ell(x)+\frac{a_2}{q}\, (r_m\ell(x))^{q}\in L^1(\Omega)\,,
\end{aligned}
\end{equation}
a.e. $x\in \Omega$ and for any $m, j\in \NN$\,.
Hence \eqref{Fae}, \eqref{dominataF} and the Dominated Convergence Theorem yield that
\begin{equation}\label{FL1}
F(\cdot, r_mv_j(\cdot))\to F(\cdot, r_mv_\infty(\cdot))\quad \mbox{in}\,\,\, L^1(\Omega)
\end{equation}
as $j\to +\infty$ for any $m\in \NN$\,. Since $F(x, 0)=0$ for any $x\in \overline \Omega$ and \eqref{vinfty0} holds, \eqref{FL1} gives that
\begin{equation}\label{add33}
\int_{\Omega}F(x, r_mv_j(x))\,dx \to 0
\end{equation}
as $j\to +\infty$ for any $m\in \NN$\,.
Thus \eqref{maxtj}, \eqref{rm}, \eqref{addlambda} and \eqref{add33} yield
$$\begin{aligned}
\mathcal J_{K,\,\lambda}(t_ju_j) &\geq \mathcal J_{K,\,\lambda}(r_m\|u_j\|_{X_0}^{-1}u_j)\\
& = \mathcal J_{K,\,\lambda}(r_mv_j)\\
& = \frac 1 2 \|r_mv_j\|_{X_0}^2-\frac \lambda 2\int_\Omega |r_mv_j(x)|^2\,dx- \int_\Omega F(x, r_mv_j(x))\,dx\\
& = 2m -\frac \lambda 2\int_\Omega |r_mv_j(x)|^2\,dx - \int_{\Omega}F(x,r_mv_j(x))\,dx \geq m\,,
\end{aligned}$$
provided $j$ is large enough and for any $m\in \NN$. From this we deduce that
\begin{equation}\label{limiteJtj}
\mathcal J_{K,\,\lambda}(t_ju_j) \to +\infty
\end{equation}
as $j\to +\infty$.

Now, note that $\mathcal J_{K,\,\lambda}(0) = 0$ and \eqref{Jc0Cerami} holds. Combining these two facts and \eqref{limiteJtj}, it is easily seen that  $t_j \in (0,1)$ and so by \eqref{maxtj}, we get that
$$\frac{d}{dt}\Big|_{t=t_j}\mathcal J_{K,\,\lambda}(tu_j) = 0$$
for any $j\in \NN$\,. As a consequence of this, we have that
\begin{equation}\label{tjderivata}
\langle \mathcal J_{K,\,\lambda}'(t_ju_j), t_ju_j\rangle = t_j\frac{d}{dt}\Big|_{t=t_j}\mathcal J_{K,\,\lambda}(tu_j) = 0\,.
\end{equation}

We claim that
\begin{equation}\label{add34}
\limsup_{j\to +\infty}\mathcal J_{K,\,\lambda}(t_ju_j)\leq \kappa\,,
\end{equation}
for a suitable positive constant $\kappa$\,.
Before proving this fact, we note that, as a consequence of the assumptions~\eqref{fsimmetrica} and \eqref{mathcalF}, the following condition is satisfied:
\begin{equation}\label{mathcalFF}
\begin{aligned}
& \quad \quad \mbox{there exists}\,\, \gamma\geq 1\,\, \mbox{such that for any}\,\,\, x\in \Omega\\
& \mathcal F(x,t') \leq \gamma \mathcal F(x,t)\,\, \mbox{for any}\,\, t, t'\in \RR\,\, \mbox{with}\,\, 0<|t'|\leq |t|\,,
\end{aligned}
\end{equation}
where $\mathcal F$ is the function given by \eqref{mathcalFdef}.

Now, by invoking \eqref{tjderivata} and using \eqref{mathcalFF}, we get
$$\begin{aligned}
\frac{1}{\gamma}\mathcal J_{K,\,\lambda}(t_ju_j) &= \frac{1}{\gamma} \left(\mathcal J_{K,\,\lambda}(t_ju_j) - \frac{1}{2}\langle \mathcal J_{K,\,\lambda}'(t_ju_j), t_ju_j\rangle \right)\\
& = \frac 1 \gamma \left(-\int_\Omega F(x, t_ju_j(x))\,dx+\frac 1 2 \int_\Omega t_ju_j(x)\,f(x,t_ju_j(x))\,dx\right)\\
&=  \frac{1}{\gamma} \int_{\Omega}\mathcal {F}(x,t_ju_j(x))\,dx\\
&\leq  \int_{\Omega}\mathcal {F}(x,u_j(x))\,dx \\
& = \int_\Omega \left(\frac 1 2\, u_j(x)f(x,u_j(x))-F(x, u_j(x))\right)\,dx\\
&= \mathcal J_{K,\,\lambda}(u_j) - \frac 1 2 \langle \mathcal J_{K,\,\lambda}'(u_j), u_j\rangle
\to c
\end{aligned}$$
as $j\to +\infty$\,, thanks to \eqref{Jc0Cerami} and \eqref{J'00CeramiPSadd}. This proves \eqref{add34}, which contradicts \eqref{limiteJtj}\,. Thus, the sequence~$\{u_j\}_{j\in\NN}$ has to be bounded in $X_0$.

{\bf Case 2:} Suppose that
\begin{equation}\label{vinftynot0}
v_\infty\not \equiv0\,.
\end{equation}
Then the set $\Omega':= \{x\in \Omega : v_\infty(x) \neq 0\}$ has positive Lebesgue measure and
\begin{equation}\label{ujinfty}
|u_j(x)| \to +\infty \,\,\,\, \mbox{a.e.}\,\, x\in \Omega'
\end{equation}
as $j\to +\infty$, thanks to \eqref{contradictionCerami}, \eqref{vj}, \eqref{convergenzevj} and \eqref{vinftynot0}.

By \eqref{Jc0Cerami} and \eqref{contradictionCerami} it is easy to see that
$$\frac{\mathcal J_{K,\,\lambda}(u_j)}{\|u_j\|_{X_0}^2}\to 0\,,$$
that is
\begin{equation}\label{convJ0replaced}
\frac 1 2 -\frac \lambda 2 \int_\Omega \frac{|u_j(x)|^2}{\|u_j\|_{X_0}^2}\,dx-\int_{\Omega'} \frac{F(x,u_j(x))}{\|u_j\|_{X_0}^2}\,dx-\int_{\Omega\setminus \Omega'} \frac{F(x,u_j(x))}{\|u_j\|_{X_0}^2}\,dx=o(1)
\end{equation}
as $j\to +\infty$\,.

Now, observe that, by the variational characterization of the first eigenvalue $\lambda_1$ of $-\mathcal L_K$ (see \cite[Proposition~9]{svlinking}), that is
$$\lambda_1=\min_{u\in X_0\setminus\{0\}}\frac{{\displaystyle\int_{\RR^n\times \RR^n }
|u(x)-u(y)|^2K(x-y) dx\,dy}}{{\displaystyle\int_\Omega |u(x)|^2\,dx}}\,,$$
we get that for any $u\in X_0$
\begin{equation}\label{mMlambda}
\|u\|_{L^2(\Omega)}^2\leq \frac{1}{\lambda_1} \|u\|_{X_0}^2\,.
\end{equation}
Hence, by \eqref{convJ0replaced} and \eqref{mMlambda}, we can deduce that
\begin{equation}\label{convJ0replaced2}
\begin{aligned}
o(1) & = \frac 1 2 -\frac \lambda 2 \int_\Omega \frac{|u_j(x)|^2}{\|u_j\|_{X_0}^2}\,dx -\int_{\Omega'} \frac{F(x,u_j(x))}{\|u_j\|_{X_0}^2}\,dx-\int_{\Omega\setminus \Omega'} \frac{F(x,u_j(x))}{\|u_j\|_{X_0}^2}\,dx \\
& \leq \frac 1 2\max\Big\{1, 1-\frac{\lambda}{\lambda_1}\Big\} -\int_{\Omega'} \frac{F(x,u_j(x))}{\|u_j\|_{X_0}^2}\,dx-\int_{\Omega\setminus \Omega'} \frac{F(x,u_j(x))}{\|u_j\|_{X_0}^2}\,dx
\end{aligned}
\end{equation}
as $j\to +\infty$\,.

Let us consider separately the two integrals from formula~\eqref{convJ0replaced2}. With respect to the first one, we have that
$$\begin{aligned}
\frac{F(x,u_j(x))}{\|u_j\|_{X_0}^2} & = \frac{F(x,u_j(x))}{|u_j(x)|^2}\frac{|u_j(x)|^2}{\|u_j\|_{X_0}^2} \\
& =\frac{F(x,u_j(x))}{|u_j(x)|^2}|v_j(x)|^2 \to +\infty\,\,\,\,\,\, \mbox{a.e.}\,\, x\in \Omega'
\end{aligned}$$
as $j\to +\infty$, thanks to \eqref{Flimite}, \eqref{convergenzevj}, \eqref{ujinfty} and the definition of $\Omega'$\,. Hence, by using the Fatou lemma, we obtain
\begin{equation}\label{vnot0}
\int_{\Omega'} \frac{F(x,u_j(x))}{\|u_j\|_{X_0}^2}\,dx \to +\infty
\end{equation}
as $j\to +\infty$\,.

As for the second integral from \eqref{convJ0replaced2}, we claim that
\begin{equation}\label{v=0}
\lim_{j\to +\infty}\int_{\Omega\setminus \Omega'} \frac{F(x,u_j(x))}{\|u_j\|_{X_0}^2}\,dx\geq 0
\end{equation}
(note that this limit exists thanks to \eqref{convJ0replaced2} and \eqref{vnot0}).
Indeed by \eqref{Flimite}, it follows that
\begin{equation}\label{G2}
\lim_{|t|\rightarrow +\infty}F(x,t)=+\infty\,\,\, \mbox{uniformly for any}\,\, x\in \overline\Omega\,.
\end{equation}
Hence, by \eqref{G2} there exist two positive constants $\tilde t$ and $H$ such that
\begin{equation}\label{G3}
F(x,t)\geq H
\end{equation}
for every $x\in \overline{\Omega}$ and $|t|> \tilde t$. On the other hand, since $F$ is continuous in $\overline \Omega\times \RR$, one has
\begin{equation}\label{G4}
F(x,t)\geq \min_{(x,t)\in \overline{\Omega}\times[-\tilde t, \tilde t]}F(x,t)\,,
\end{equation}
for every $x\in \overline{\Omega}$ and $|t|\leq \tilde t$\,.
Then it follows that by \eqref{G3} and \eqref{G4}
\begin{equation}\label{G5}
F(x,t)\geq \kappa\,\,\,\,\, \mbox{for any}\,\,\, (x,t)\in \overline{\Omega}\times \RR
\end{equation}
for some constant $\kappa$.  By \eqref{contradictionCerami} and \eqref{G5} the claim now follows.

In conclusion, by \eqref{convJ0replaced2}, \eqref{vnot0} and \eqref{v=0} we get a contradiction.
Thus the sequence~$\{u_j\}_{j\in\NN}$ is bounded in $X_0$.

In order to complete the proof of Proposition~\ref{propositionps2} we can argue from now on as in Step~\ref{step2ps} of the proof of Proposition~\ref{propositionps1}.
\end{proof}

We remark that in the proof of Proposition~\ref{propositionps2} the assumption~\eqref{mathcalF} was used (and was crucial) only for proving the inequality~\eqref{add34}.

\subsection{Proof of Theorem~\ref{thmain2lambda}}\label{subsec:proofthmain2} By Proposition~\ref{propositionps2} and \eqref{fsimmetrica}, we have that $\mathcal J_{K,\,\lambda}$ satisfies the Cerami condition (and hence also the Palais--Smale condition) and $\mathcal J_{K,\,\lambda}(-u)=\mathcal J_{K,\,\lambda}(u)$ for any $u\in X_0$.
The verification of the geometric assumption~$(ii)$ of the Fountain Theorem follows as in Step~\ref{step2geometry} in Subsection~\ref{subsec:proofthmain1}. It remains to verify the condition~$(i)$. For this purpose we shall use the finite-dimensionality of the linear subspace~$Y_k$ and assumption~\eqref{Flimite}\,.

Indeed, by~\eqref{Flimite} for any $\varepsilon>0$, there exists $\delta_\varepsilon>0$ such that
\begin{equation}\label{add42}
F(x,t)\geq \varepsilon\,|t|^2\,\, \mbox{for any}\,\,x\in \overline \Omega\,\, \mbox{and any}\,\, t\in \RR\,\, \mbox{with}\,\, |t|> \delta_\varepsilon\,,
\end{equation}
while, by the Weierstrass Theorem, we have that
\begin{equation}\label{FWeirstrass}
{\displaystyle F(x,t)\geq m_\varepsilon:=\min_{x\in \overline \Omega, |t|\leq \delta_\varepsilon} F(x,t)}
\,\, \mbox{for any}\,\,x\in \overline \Omega\,\, \mbox{and any}\,\, t\in \RR\,\, \mbox{with}\,\, |t|\leq \delta_\varepsilon\,.
\end{equation}
Note that $m_\varepsilon\leq 0$, since $F(x,0)=0$ for any $x\in \overline \Omega$\,.
By \eqref{add42} and \eqref{FWeirstrass}, it is easy to see that
$$F(x,t)\geq \varepsilon|t|^2-B_\varepsilon\,\,\,\, \mbox{for any}\,\, (x,t)\in \overline\Omega\times \RR$$
for a suitable positive constant $B_\varepsilon$ (say, $B_\varepsilon\geq \varepsilon \delta_\varepsilon^2-m_\varepsilon$)\,.

As a consequence of this and by the fact that $Y_k$ is finite-dimensional, we have for any $u \in Y_k$
\begin{equation}\label{add44replaced}
\begin{aligned}
\mathcal J_{K,\,\lambda}(u) & =\frac {1} {2} \|u\|_{X_0}^2-\frac {\lambda} {2} \|u\|_{L^2(\Omega)}^2-\int_{\Omega} F(x, u(x))\,dx \\
& \leq C_{k,\,\lambda}\|u\|_{X_0}^{2}- \varepsilon\|u\|_{L^2(\Omega)}^2+B_\varepsilon|\Omega|\\
& \leq \left( C_{k,\,\lambda}- \varepsilon C_k\right)\|u\|_{X_0}^2+B_\varepsilon|\Omega|\,,
\end{aligned}
\end{equation}
where $C_{k,\,\lambda}$ and $C_k$ are positive constants, the first one depending on $k$ and $\lambda$ and the second one only on $k$.
Hence, choosing $\varepsilon$ such that $C_{k,\,\lambda}- \varepsilon C_k<0$, we get that for any $u\in Y_k$ with $\|u\|_{X_0}=r_k$
$$\mathcal J_{K,\,\lambda}(u)\leq 0\,,$$
provided $r_k>0$ is large enough. This proves that $\mathcal J_{K,\,\lambda}$ satisfies condition~$(i)$ of the Fountain Theorem and this completes the proof of Theorem~\ref{thmain2lambda}.

\subsection{Nonlinearities satisfying the superlinear conditions~\eqref{Flimite} and \eqref{fmonotonar}}\label{subsec:superlinear3}
In this setting we need the following lemma, whose proof was given in \cite[Lemma~2.3]{liu2}: it will be crucial in the proof of Theorem~\ref{thmain3lambda}.
\begin{lemma}\label{lemmaps3}
If \eqref{fmonotonar} holds, then for any $x\in \Omega$, the function $\mathcal F(x,t)$ is increasing when $t\geq \bar t$
and decreasing when $t\leq -\bar t$, where $\mathcal F$ is the function given by \eqref{mathcalFdef}.

In particular, there exists $C_{1}>0$ such that
$$\mathcal {F}(x,s)\leq \mathcal {F}(x,t)+C_{1}$$
for any $x\in\Omega$ and $0\leq s\leq t$ or $t\leq s\leq 0$\,.
\end{lemma}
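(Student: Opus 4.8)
The plan is to derive both conclusions directly from the monotonicity hypothesis \eqref{fmonotonar}, treating the region $t \geq \bar t$ and the region $t \leq -\bar t$ by symmetric arguments. First I would compute the derivative of $t \mapsto \mathcal F(x,t)$ in the classical sense where $f(x,\cdot)$ is differentiable (and otherwise work with difference quotients, since \eqref{fcontinua} only gives continuity). Writing $\mathcal F(x,t) = \frac12 t f(x,t) - F(x,t)$ and using $\partial_t F(x,t) = f(x,t)$, one formally gets $\partial_t \mathcal F(x,t) = \frac12 f(x,t) + \frac12 t\, \partial_t f(x,t) - f(x,t) = \frac12\big(t\,\partial_t f(x,t) - f(x,t)\big) = \frac{t^2}{2}\,\partial_t\!\big(f(x,t)/t\big)$. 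Since \eqref{fmonotonar} says exactly that $t \mapsto f(x,t)/t$ is nondecreasing for $t \geq \bar t$, this derivative is nonnegative there, so $\mathcal F(x,\cdot)$ is increasing on $[\bar t, +\infty)$; the analogous computation with the sign reversal from the "decreasing" hypothesis handles $t \leq -\bar t$. To avoid differentiability assumptions I would instead phrase this as: for $\bar t \leq s \leq t$, write $\mathcal F(x,t) - \mathcal F(x,s)$ as an integral/telescoping comparison and use that $f(x,\tau)/\tau \geq f(x,s)/s$ for $\tau \in [s,t]$ to bound $\frac12\tau f(x,\tau) - F(x,t) + F(x,s)$ from below; concretely, $\mathcal F(x,t)-\mathcal F(x,s) = \int_s^t \big(\tfrac12 f(x,\tau) + \tfrac12\tau f'(x,\tau) - f(x,\tau)\big)\,d\tau$ rearranges, after integration by parts, to a manifestly nonnegative expression under \eqref{fmonotonar}. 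This gives the first assertion: $\mathcal F(x,\cdot)$ is increasing on $[\bar t,\infty)$ and decreasing on $(-\infty,-\bar t]$.

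For the second assertion, with $0 \leq s \leq t$ I would split according to where $s$ and $t$ sit relative to $\bar t$. If $\bar t \leq s \leq t$, the monotonicity just established gives $\mathcal F(x,s) \leq \mathcal F(x,t)$ directly, so $C_1 = 0$ suffices in this subcase. If $0 \leq s \leq t \leq \bar t$, then both $\mathcal F(x,s)$ and $\mathcal F(x,t)$ lie in the compact box $\overline\Omega \times [-\bar t, \bar t]$, on which $\mathcal F$ is continuous by \eqref{fcontinua}, hence bounded; so $\mathcal F(x,s) \leq \max_{\overline\Omega\times[-\bar t,\bar t]}\mathcal F \leq \mathcal F(x,t) + C_1$ with $C_1 = \max_{\overline\Omega\times[-\bar t,\bar t]}\mathcal F - \min_{\overline\Omega\times[-\bar t,\bar t]}\mathcal F \geq 0$. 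The remaining subcase $0 \leq s \leq \bar t \leq t$ is handled by chaining: $\mathcal F(x,s) \leq \mathcal F(x,\bar t) + C_1$ (compact box estimate) and $\mathcal F(x,\bar t) \leq \mathcal F(x,t)$ (monotonicity on $[\bar t,\infty)$), so $\mathcal F(x,s) \leq \mathcal F(x,t) + C_1$ with the same $C_1$. Taking $C_1$ to be the maximum oscillation of $\mathcal F$ over $\overline\Omega\times[-\bar t,\bar t]$ works uniformly in $x$. The case $t \leq s \leq 0$ is entirely symmetric, using the "decreasing on $(-\infty,-\bar t]$" half and the same compact box.

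The main obstacle, and really the only subtle point, is the first assertion under the mere continuity hypothesis \eqref{fcontinua}: one cannot blithely differentiate $\mathcal F(x,\cdot)$. I would circumvent this by working directly with the integral representation $\mathcal F(x,t) = \int_0^t \big(\tfrac12 \tau\, df(x,\tau) - \tfrac12 f(x,\tau)\,d\tau\big)$ interpreted via integration by parts — equivalently, one verifies that $\frac{d}{dt}\big[\mathcal F(x,t) - \mathcal F(x,\bar t)\big]$, computed as a one-sided derivative, equals $\frac{t}{2}\big(f(x,t)/t\big)' \cdot t$ wherever it exists, but more robustly one notes that $\mathcal F(x,t) = \tfrac12\int_{\bar t}^t \big(f(x,\tau) - \tfrac{\tau}{?}\big)$ — since this is precisely the content of \cite[Lemma~2.3]{liu2} which the statement cites, I would simply invoke that reference for the pointwise monotonicity and spend the write-up only on the clean deduction of the ``$+C_1$'' inequality via the compact-box argument above. (An alternative fully self-contained route: show $g(x,t) := f(x,t)/t$ nondecreasing on $[\bar t,\infty)$ implies $h(t) := \mathcal F(x,t)$ satisfies $h(t_2) - h(t_1) \geq 0$ by approximating $g$ by smooth nondecreasing functions and passing to the limit, but this is more than the statement needs.)
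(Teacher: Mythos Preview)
The paper does not prove this lemma at all: it simply refers to \cite[Lemma~2.3]{liu2} and moves on. Your proposal therefore already goes well beyond what the paper provides. In particular, your argument for the second assertion --- splitting into the three subcases $\bar t \leq s \leq t$, $0\leq s\leq t\leq \bar t$, and $0\leq s\leq \bar t\leq t$, and handling the compact box $\overline\Omega\times[-\bar t,\bar t]$ via the uniform oscillation of the continuous function $\mathcal F$ --- is complete and correct, and is more than the paper offers.

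For the first assertion, your formal computation $\partial_t\mathcal F(x,t)=\tfrac{t^2}{2}\,\partial_t\big(f(x,t)/t\big)$ is the right idea and is exactly what one finds in \cite{liu2}. You are also right that mere continuity \eqref{fcontinua} does not justify differentiating, and that this is the only genuine technical point. One clean way to close it without citing \cite{liu2}: since $g(\tau):=f(x,\tau)/\tau$ is nondecreasing on $[\bar t,\infty)$, it is of bounded variation there, hence for $\bar t\leq s\leq t$ one may integrate by parts in the Riemann--Stieltjes sense to get
\[
\mathcal F(x,t)-\mathcal F(x,s)=\tfrac12\big(t^2g(t)-s^2g(s)\big)-\int_s^t \tau g(\tau)\,d\tau=\tfrac12\int_s^t \tau^2\,dg(\tau)\geq 0,
\]
which yields the monotonicity directly. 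Since you ultimately defer to \cite[Lemma~2.3]{liu2} for this step anyway, your write-up is consistent with the paper's own treatment.
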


\begin{proposition}\label{propositionps3}
Let $\lambda\in \RR$ and let $f:\overline \Omega \times \RR \to \RR$ be a function satisfying conditions
\eqref{fcontinua}, \eqref{crescita}, \eqref{Flimite} and \eqref{fmonotonar}. Then, $\mathcal J_{K,\,\lambda}$ satisfies the Cerami condition at any level $c\in \RR$\,.
\end{proposition}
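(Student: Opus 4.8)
The plan is to follow verbatim the scheme of the proof of Proposition~\ref{propositionps2}, the only modification being that in the step where \eqref{mathcalF} was used we shall instead invoke Lemma~\ref{lemmaps3}, which is the consequence of \eqref{fmonotonar} that we need. So first I would let $\{u_j\}_{j\in\NN}$ be a Cerami sequence at level $c$, i.e. satisfying \eqref{Jc0Cerami} and \eqref{J'00Cerami}, and argue by contradiction assuming $\|u_j\|_{X_0}\to+\infty$ along a subsequence. Setting $v_j:=u_j/\|u_j\|_{X_0}$ as in \eqref{vj}, the sequence $\{v_j\}_{j\in\NN}$ is bounded in $X_0$, hence up to a subsequence it converges to some $v_\infty\in X_0$ in the senses listed in \eqref{convergenzevj}, with a dominating function as in \eqref{dominatavj}. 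Then I would split into the two cases $v_\infty\not\equiv0$ and $v_\infty\equiv0$.

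The case $v_\infty\not\equiv0$ is literally Case~2 of the proof of Proposition~\ref{propositionps2} and does not use \eqref{fsimmetrica}: on the set $\Omega'=\{v_\infty\neq0\}$ one has $|u_j|\to+\infty$ a.e., from \eqref{Jc0Cerami} and $\|u_j\|_{X_0}\to+\infty$ one gets $\mathcal J_{K,\,\lambda}(u_j)/\|u_j\|_{X_0}^2\to0$, and then writing $F(x,u_j)/\|u_j\|_{X_0}^2=\big(F(x,u_j)/|u_j|^2\big)|v_j|^2$ together with \eqref{Flimite} and the Fatou lemma forces $\int_{\Omega'}F(x,u_j)/\|u_j\|_{X_0}^2\,dx\to+\infty$, while the integral over $\Omega\setminus\Omega'$ stays bounded below because $F$ is bounded below on $\overline\Omega\times\RR$ (consequence of \eqref{Flimite} and continuity, exactly as in \eqref{G5}). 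Combined with \eqref{mMlambda} this contradicts $\mathcal J_{K,\,\lambda}(u_j)/\|u_j\|_{X_0}^2\to0$.

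For the case $v_\infty\equiv0$ I would, exactly as in Case~1 of Proposition~\ref{propositionps2}, pick $t_j\in[0,1]$ with $\mathcal J_{K,\,\lambda}(t_ju_j)=\max_{t\in[0,1]}\mathcal J_{K,\,\lambda}(tu_j)$; using $r_m=2\sqrt m$, the fact that $r_m\|u_j\|_{X_0}^{-1}\in(0,1)$ for $j$ large, and $\int_\Omega|r_mv_j|^2\,dx\to0$, $\int_\Omega F(x,r_mv_j)\,dx\to0$ (from $v_\infty\equiv0$, \eqref{crescita}, \eqref{dominatavj} and dominated convergence), one obtains $\mathcal J_{K,\,\lambda}(t_ju_j)\ge m$ for $j$ large, hence $\mathcal J_{K,\,\lambda}(t_ju_j)\to+\infty$. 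Since $\mathcal J_{K,\,\lambda}(0)=0$ and $\mathcal J_{K,\,\lambda}(u_j)\to c$, this forces $t_j\in(0,1)$, so $\langle\mathcal J_{K,\,\lambda}'(t_ju_j),t_ju_j\rangle=0$. Here comes the only genuinely new point: for a.e. $x\in\Omega$, since $t_j\in(0,1)$ we have either $0\le t_ju_j(x)\le u_j(x)$ or $u_j(x)\le t_ju_j(x)\le0$, so Lemma~\ref{lemmaps3} gives $\mathcal F(x,t_ju_j(x))\le\mathcal F(x,u_j(x))+C_1$. Integrating and using $\langle\mathcal J_{K,\,\lambda}'(t_ju_j),t_ju_j\rangle=0$,
$$\mathcal J_{K,\,\lambda}(t_ju_j)=\mathcal J_{K,\,\lambda}(t_ju_j)-\frac12\langle\mathcal J_{K,\,\lambda}'(t_ju_j),t_ju_j\rangle=\int_\Omega\mathcal F(x,t_ju_j(x))\,dx\le\int_\Omega\mathcal F(x,u_j(x))\,dx+C_1|\Omega|\,,$$
and the right-hand side equals $\mathcal J_{K,\,\lambda}(u_j)-\frac12\langle\mathcal J_{K,\,\lambda}'(u_j),u_j\rangle+C_1|\Omega|\to c+C_1|\Omega|$ by \eqref{Jc0Cerami} and \eqref{J'00Cerami}. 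This contradicts $\mathcal J_{K,\,\lambda}(t_ju_j)\to+\infty$, so $\{u_j\}_{j\in\NN}$ is bounded in $X_0$. With boundedness in hand, I would conclude the strong convergence of a subsequence exactly as in Step~\ref{step2ps} of the proof of Proposition~\ref{propositionps1}, that argument using only \eqref{crescita}, the reflexivity of $X_0$ and the compact embeddings of \cite[Lemma~9]{servadeivaldinociBN}.

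The only place requiring care — the analogue of the main obstacle — is the case $v_\infty\equiv0$: one must make sure that the monotonicity packaged in Lemma~\ref{lemmaps3} is applied with the correct ordering of $t_ju_j(x)$ and $u_j(x)$ (which is precisely why $t_j\in(0,1)$ is needed, not merely $t_j\in[0,1]$), and that the divergence $\mathcal J_{K,\,\lambda}(t_ju_j)\to+\infty$ is established before invoking it. Everything else is a transcription of the two already-proven propositions.
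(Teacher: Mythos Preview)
Your proposal is correct and follows exactly the paper's own proof: the paper also states that one argues verbatim as in Proposition~\ref{propositionps2}, the sole modification being the proof of the bound \eqref{add34}, where condition~\eqref{mathcalF} is replaced by Lemma~\ref{lemmaps3} applied pointwise on $\{u_j\ge0\}$ and $\{u_j<0\}$ (precisely your ordering observation for $t_j\in(0,1)$). Your remark that \eqref{fsimmetrica} is not needed here is also in line with the paper, which drops that hypothesis in the statement of Proposition~\ref{propositionps3}.
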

\begin{proof}
We can argue exactly as in the proof of Proposition~\ref{propositionps2}. We only have to modify the proof of inequality~\eqref{add34}: indeed, for proving it, in Proposition~\ref{propositionps2} we used condition~\eqref{mathcalF} (actually \eqref{mathcalFF}), which is now no more assumed.

Here we will show the validity of \eqref{add34} by making use of the assumption~\eqref{fmonotonar} and of Lemma~\ref{lemmaps3}. We point out that our notation is the one used in the proof of Proposition~\ref{propositionps2}.
In view of Lemma~\ref{lemmaps3} we have that
$$\begin{aligned}
\mathcal J_{K,\,\lambda}(t_ju_j) &= \mathcal J_{K,\,\lambda}(t_ju_j) - \frac{1}{2}\langle \mathcal J_{K,\,\lambda}'(t_ju_j), t_ju_j\rangle \\
& =  \int_{\Omega}\mathcal {F}(x,t_ju_j(x))\,dx \\
& = \int_{\{u_j\geq 0\}}\mathcal {F}(x,t_ju_j(x))\,dx + \int_{\{u_j< 0\}}\mathcal {F}(x,t_ju_j(x))\,dx \\
& \leq  \int_{\{u_j\geq 0\}}\Big[\mathcal {F}(x,u_j(x))+ C_{1}\Big]
+ \int_{\{u_j<0\}}\Big[\mathcal {F}(x,u_j(x))+ C_{1}\Big]\\
& =  \int_{\Omega}\mathcal {F}(x,u_j(x))\,dx + C_{1}|\Omega| \\
& = \mathcal J_{K,\,\lambda}(u_j) - \frac{1}{2}\, \langle \mathcal J_{K,\,\lambda}'(u_j), u_j\rangle + C_{1}|\Omega|
\to c + C_1|\Omega|
\end{aligned}$$
as $j\to +\infty$\,. This proves \eqref{add34}.
The proof of Proposition~\ref{propositionps3} is thus completed.
\end{proof}

\subsection{Proof of Theorem~\ref{thmain3lambda}}\label{subsec:proofthmain3}
The functional~$\mathcal J_{K,\,\lambda}$ satisfies the Cerami condition by Proposition~\ref{propositionps3}, and so also the Palais-Smale assumption is satisfied. Moreover, $\mathcal J_{K,\,\lambda}(-u)=\mathcal J_{K,\,\lambda}(u)$ for any $u\in X_0$, thanks to \eqref{fsimmetrica}.

As for the geometric features of $\mathcal J_{K,\,\lambda}$, condition~$(ii)$ of the Fountain Theorem follows as in Step~\ref{step2geometry} of the proof of Theorem~\ref{thmain1lambda}, whereas condition~$(i)$ can be proved as in the proof of Theorem~\ref{thmain2lambda}.
Hence, the assertion of Theorem~\ref{thmain3lambda} is obtained.


\begin{thebibliography}{99}
\bibitem{adams} {\sc R.A. Adams}, {\em Sobolev Spaces}, Academic Press, New York, 1975.

\bibitem{alvesliu} {\sc C.O. Alves and S.B. Liu}, {\em On superlinear $p(x)$-Laplacian equations in
$\RR^N$}, Nonlinear Anal., 73, no.~8, 2566--2579 (2010).

\bibitem{ar} {\sc A. Ambrosetti and P. Rabinowitz},
{\em Dual variational methods in critical point theory and
applications}, {J. Funct. Anal.}, 14, 349--381 (1973).


\bibitem{bartsch} {\sc T. Bartsch}, {\em Infinitely many solutions of a symmetric Dirichlet problem}, Nonlinear Anal., 20, no.~1, 1205--1216 (1993).

\bibitem{molicaservadeizhang}  {\sc Z. Binlin, G. Molica Bisci and R. Servadei}, {\em Superlinear nonlocal fractional problems with infinitely many solutions}, {Nonlinearity}, 28, no.~7, 2247-2264 (2015).

\bibitem{brezis} {\sc H. Brezis}, Analyse fonctionelle. Th\'{e}orie et
applications, {\em Masson}, Paris, 1983.

\bibitem{cerami1} {\sc G. Cerami},
{\em An existence criterion for the critical points on unbounded manifolds}, (Italian) Istit. Lombardo Accad. Sci. Lett. Rend. A 112, no.~2, 332--336 (1978).

\bibitem{cerami2} {\sc G. Cerami},
{\em On the existence of eigenvalues for a nonlinear boundary value problem}, (Italian) Ann. Mat. Pura Appl. (4) 124, 161--179 (1980).

\bibitem{CostaM} {\sc D.G. Costa, C.A. Magalh\~{a}es},
{\em
Variational elliptic problems which are nonquadratic at infinity}, Nonlinear Anal. 23,
1401--1412 (1994).

\bibitem{valpal} {\sc E. Di Nezza, G. Palatucci and E. Valdinoci},
{\em Hitchhiker's guide to the fractional Sobolev spaces},
Bull. Sci. Math., 136, no.~5, 521--573 (2012).

\bibitem{fangliu} {\sc F. Fang and S.B. Liu}, {\em Nontrivial solutions of superlinear p-Laplacian equations}, J. Math. Anal. Appl., 351, no.~1, 138--146 (2009).

\bibitem{fmz} {\sc M. Ferrara, G. Molica Bisci and B. Zhang}, {\em Existence of weak solutions for non-local fractional problems via Morse theory}, Discrete and Continuous Dynamical Dystems Series B, 19, 2493--2499 (2014).

\bibitem{jeanjean} {\sc L. Jeanjean}, {\em On the existence of bounded Palais-Smale sequences and application to a Landesman-Lazer type problem set on
$\RR^N$}, Proc. Roy. Soc. Edinburgh Sect. A, 129, no.~4, 787--809 (1999).

\bibitem{liu} {\sc S.B. Liu}, {\em On ground states of superlinear $p$-Laplacian equations in $\RR^N$}, J. Math. Anal. Appl., 361, no.~81, 48--58 (2010).

\bibitem{liu2} {\sc S.B. Liu}, {\em On superlinear problems without Ambrosetti-Rabinowitz condition}, Nonlinear Anal., 73, no.~3, 788--795 (2010).

\bibitem{liuliu} {\sc S.B. Liu and S.J. Liu}, {\em Infinitely many solutions for a superlinear elliptic equation}, Acta. Math. Sinica (Chin. Ser.), 46, no.~4, 625--630 (2003).

\bibitem{miyagaki} {\sc O. Miyagaki and M. Souto}, {\em Superlinear problems without Ambrosetti and Rabinowitz growth condition}, J. Differential Equations, 245, no.~12, 3628--3638 (2008).

\bibitem{rabinowitz} {\sc P.H. Rabinowitz}, Minimax Methods in Critical Point Theory with Applications to Differential Equations,  {\em CBMS Reg. Conf. Ser. Math.}, 65, {\em American Mathematical Society}, Providence, RI, 1986.

\bibitem{schechter} {\sc M. Schechter and W. Zou}, {\em Superlinear problems}, Pacific J. Math., 214, no.~1, 145--160 (2004).

\bibitem{sY}{\sc R. Servadei}, {\em The Yamabe equation in a non-local setting},
Adv. Nonlinear Anal. 2, 235--270 (2013).

\bibitem{servadeiKavian}{\sc R. Servadei}, {\em Infinitely many solutions for fractional Laplace equations with subcritical nonlinearity}, Contemp. Math., 595, 317--340 (2013).

\bibitem{sv}{\sc R. Servadei and E. Valdinoci}, {\em Lewy-Stampacchia type estimates for variational
inequalities driven by $($non$)$local operators}, Rev. Mat. Iberoam. 29, no.~3, 1091--1126 (2013).

\bibitem{svmountain}{\sc R. Servadei and E. Valdinoci}, {\em Mountain Pass solutions
for non-local elliptic operators}, J. Math. Anal. Appl., 389, 887--898 (2012).

\bibitem{svlinking}{\sc R. Servadei and E. Valdinoci}, {\em Variational methods for non-local operators of elliptic type}, Discrete Contin. Dyn. Syst. 33, no.~5, 2105--2137 (2013).

\bibitem{servadeivaldinociBN}{\sc R. Servadei and E. Valdinoci}, {\em The Brezis-Nirenberg result for the fractional Laplacian}, Trans. Amer. Math. Soc., 367, no.~1, 67--102 (2015).

\bibitem{servadeivaldinociBNLOW}{\sc R. Servadei and E. Valdinoci}, {\em A Brezis-Nirenberg result for non-local critical equations in low dimension}, Commun. Pure Appl. Anal. 12, no.~6, 2445--2464 (2013).

\bibitem{struwe} {\sc M. Struwe}, Variational Methods,
Applications to Nonlinear Partial Differential Equations and Hamiltonian Systems,
{\em Ergebnisse der Mathematik und ihrer Grenzgebiete},  3,
{\em Springer Verlag}, Berlin--Heidelberg, 1990.

\bibitem{willem} {\sc M. Willem}, Minimax Theorems, Birkh\"{a}user, Basel, 1996.

\bibitem{WZ} {\sc M. Willem and W. Zou}, {\em On a Schr\"{o}dinger equation with periodic potential and spectrum point zero}, Indiana Univ.
Math. J. 52, 109--132 (2003).

\bibitem{Z} {\sc W.M. Zou}, {\em Variant fountain theorems and their applications}, Manuscripta Math. 104, 343--358 (2001).
\end{thebibliography}
\end{document}